\documentclass{amsart}

\usepackage{bbm}
\usepackage{amssymb}
\usepackage{hyperref}

\newcommand*{\mailto}[1]{\href{mailto:#1}{\nolinkurl{#1}}}

\newtheorem{theorem}{Theorem}[section]

\newtheorem{lemma}[theorem]{Lemma}
\newtheorem{proposition}[theorem]{Proposition}
\newtheorem{corollary}[theorem]{Corollary}

\newcommand{\R}{{\mathbb R}}
\newcommand{\N}{{\mathbb N}}

\newcommand{\C}{{\mathbb C}}

\newcommand{\spr}[2]{\langle #1 , #2 \rangle}

\newcommand{\I}{\mathrm{i}}

\newcommand{\supp}{\mathrm{supp}}
\newcommand{\linspan}{\mathrm{span}}
\newcommand{\indik}{\mathbbm{1}}

\newcommand{\trace}[1]{\mathrm{tr}(#1)}
\renewcommand{\det}[1]{\mathrm{det}(#1)}

\newcommand{\M}{M}
\newcommand{\edge}{{e}}
\newcommand{\fedge}{{d}}
\newcommand{\bedge}{{b}}
\newcommand{\edges}{\mathcal{E}}
\newcommand{\G}{\mathcal{G}}
\newcommand{\CoupM}{R}
\newcommand{\repc}{\mathsf{Y}}

\newcommand{\Trf}{\mathsf{T}}


\numberwithin{equation}{section}


\begin{document}

\title{An inverse spectral problem for a star graph of Krein strings}

\author[J.\ Eckhardt]{Jonathan Eckhardt}
\address{Institut Mittag-Leffler\\
Aurav\"agen 17\\ SE-182 60 Djursholm\\ Sweden}
\email{\mailto{jonathaneckhardt@aon.at}}

\thanks{\href{http://dx.doi.org/10.1515/crelle-2014-0003}{J.\ Reine Angew.\ Math.\ {\bf 715} (2016), 189--206}}
\thanks{{\it Research supported by the AXA Research Fund under the Mittag-Leffler Fellowship Project}}

\keywords{Star graph, Krein strings, inverse spectral theory}
\subjclass[2010]{Primary 34B45, 34L05; Secondary 34A55, 34L40}

\begin{abstract}
 We solve an inverse spectral problem for a star graph of Krein strings, where the known spectral data comprises the spectrum associated with the whole graph, the spectra associated with the individual edges as well as so-called coupling matrices.  
 In particular, we show that these spectral quantities uniquely determine the weight within the class of Borel measures on the graph, which give rise to trace class resolvents. 
 Furthermore, we obtain a concise characterization of all possible spectral data for this class of weights. 
\end{abstract}

\maketitle

\section{Introduction}

 {\sf O}ne of the origins of spectral theory lies in the investigation of vibrating strings: 
  The equations describing small transversal vibrations of an inhomogeneous string, which is clamped between its endpoints at $a$ and $b$ are given by 
  \begin{align}\label{eqnWE}
     \omega(x)  u_{tt}(x,t) & = u_{xx}(x,t), & u(a,t) &  = u(b,t) = 0, 
  \end{align}
  where $u$ represents the displacement of the string. 
 Hereby $\omega$ denotes the mass density of the string, which is stretched by a unit force. 
 Employing the common separation of variables method for this equation, one is led to the spectral problem
\begin{align}\label{eqnSP}
 - f''(x) = z\, \omega(x) f(x), \quad x\in(a,b),
\end{align}
 which is of fundamental importance for solving the wave equation~\eqref{eqnWE}. 

 {\sf M}ark Krein was the first one to deal with a corresponding inverse spectral problem (which is why~\eqref{eqnSP} is usually termed Krein string). 
   Given the spectrum associated with~\eqref{eqnSP} together with so-called {\em norming constants}, he managed to find the corresponding weight density $\omega$. 
  In particular, he was the first to notice that an elementary case of this inverse problem can be solved by virtue of the theory on continued fractions by Thomas Stieltjes.
  Thus, this particular case, where $\omega$ is a finite sum of weighted Dirac measures, is usually referred to as Stieltjes strings. 
  The solution of the corresponding inverse spectral problem can be written down explicitly in terms of the finite set of spectral data. 
  Utilizing this, the inverse problem in the general case can then be solved by approximating the spectral data and employing continuity and compactness arguments. 
  For several surveys on direct and inverse spectral theory for Krein strings we refer to \cite{dymc76}, \cite{gakr41}, \cite{kakr74}, \cite{ka94}, \cite{kowa82}.  

 {\sf A}nother kind of inverse spectral problem is the so-called {\em inverse three-spectra problem}. 
  Hereby, the norming constants are replaced by two additional spectra, which are obtained by clamping the string at some interior point $c\in(a,b)$.  
  This type of inverse problem was first introduced in \cite{pi99} for Schr\"odinger operators (see also \cite{dr09}, \cite{dr10}, \cite{gesi99}, \cite{pi06}). 
  Following this, corresponding inverse problems have been investigated in \cite{DistInverse}, \cite{hrmy03} for Sturm--Liouville operators with distributional potentials, in \cite{gesi97}, \cite{mite04} for Jacobi operators and in \cite{alhrmy07} for operators corresponding to certain oscillating systems.
  In the context of Krein strings, the inverse three-spectra problem has been solved in \cite{bopi08} for the class of Stieltjes strings and in \cite{ThreeSpectra} for the class of Krein strings with trace class resolvents. 
  Let us remark that there is typically an issue with non-uniqueness, already noticed in \cite{gesi99}. 
  In order to overcome this problem, one has to introduce additional spectral quantities; see \cite{ThreeSpectra}, \cite{mite04}. 

 {\sf R}egarding the interval $(a,b)$ as a two-edged graph with central vertex $c$, the inverse three-spectra problem can also be viewed as an inverse spectral problem for a star graph. 
 From this perspective, the given spectral data comprise the spectrum associated with the whole graph as well as the spectra associated with the two edges of the graph. 
 Of course, this immediately suggests a natural generalization of the inverse three-spectra problem to general star graphs with finitely many edges. 
 This problem has been dealt with in \cite{pi00}, \cite{pi07} for Schr\"odinger operators and in \cite{bopi08a}, \cite{pirotr13} for Stieltjes strings.
 In the present article, we will give a complete solution of the corresponding inverse spectral problem for the class of Krein strings with trace class resolvents. 
 At this point, let us mention that there are of course also several other inverse problems on graphs. 
 One of the most delicate questions in this respect is, to which extent the structure of the graph can be read off from suitable spectral data; \cite{gusm01}, \cite{kuno05}. 
 In particular, the so-called {\em boundary control method} has been employed to investigate these kind of problems in \cite{avku08}, \cite{avkuno10}, \cite{be04}, \cite{bewa09}. 
 The recovery of the differential operators on the edges of an a priori known graph, using various spectral data, has also been studied in \cite{brwe05}, \cite{cuwa07}, \cite{ge88}, \cite{yu05}.

 {\sf S}etting the stage, let $\G\subseteq\R^2$ be a planar star graph with finitely many edges. 
  We think of this graph as a system of inhomogeneous strings, joined at the central vertex of the graph and clamped at the outer vertices. 
  The (vertical) displacement of such a web of strings can be described by a real-valued function $u$ on $\G$ and its mass density by a non-negative function (respectively, a Borel measure) $\omega$ on $\G$.  
  The equations describing the motion of this system are analogous to the first one in~\eqref{eqnWE} for every edge of the graph, together with a Kirchhoff type interface condition at the central vertex and Dirichlet boundary conditions at the outer vertices. 
  To be precise, this is not quite true since the stretching forces might (and for some graphs, depending on its angles, will have to) be different for the individual edges.
  However, since there are no additional difficulties arising from this issue, we will assume that all our strings are stretched by a unit force for simplicity of notation. 

 {\sf V}ia a usual separation of variables ansatz, one is led to the spectral problem 
 \begin{align}\label{eqnDESG}
  - f''(x) = z\, \omega(x) f(x), \quad x\in\G, 
 \end{align}
 on the whole graph $\G$. 
 Of course, it is not obvious at all how this differential equation has to be interpreted. 
 Roughly speaking, it has its usual meaning on each of the individual edges augmented by a Kirchhoff type interface condition at the central vertex (the precise notion will be given in Section~\ref{s2}). 
 
 {\sf O}ur aim in this article is to solve the following inverse spectral problem: 
   Given the spectrum associated with~\eqref{eqnDESG} as well as all the spectra associated with the individual edges of the graph, we try to find the corresponding weight density  $\omega$ on the graph $\G$. 
  In the special case of Stieltjes strings, this problem has been solved quite recently in \cite{bopi08a}, \cite[Section~2]{pirotr13}, where the solution can again be written down explicitly in terms of the spectral data.
  Unsurprisingly, this solution turned out to be unique only if the spectra are assumed to be disjoint. 

 {\sf L}acking uniqueness for this inverse spectral problem, we are urged to introduce additional spectral quantities, which guarantee a unique solution. 
  For the inverse three-spectra problem, this can be done by so-called {\em coupling constants}, relating the norms of eigenfunctions on the two subintervals. 
  In the case of a general star graph, we need somewhat more elaborate spectral quantities, which relate the norms of eigenfunctions on the individual edges. 
  This will lead us to the introduction of the so-called {\em coupling matrices}, containing precisely this information.
  Augmented by them, our inverse spectral problem will turn out to be uniquely solvable. 
   
 {\sf T}he content of the present article can be outlined as follows: 
  In Section~\ref{s2} we will set up a precise notion of the differential equation~\eqref{eqnDESG} on a star graph $\G$, where $\omega$ is a non-negative Borel measure on $\G$ satisfying a particular growth condition near the outer vertices. 
  Consequently, we will introduce the associated self-adjoint linear relation in the Sobolev space $H_0^1(\G)$ as well as those relations associated with the individual edges of the graph. 
  This is somewhat in contrast to most of the existing literature (for example, see \cite{dymc76}, \cite{kakr74} for intervals and \cite{be04}, \cite{kaklvowe09} for graphs) which usually considers the spectral problem in a weighted $L^2(\G;\omega)$ Hilbert space. 
  Consequently, after deriving few necessary conditions for our spectral data, we will introduce the coupling matrices in Section~\ref{s3}. 
  The following section is then devoted to the statement and the proof of the main result of this article; the solution of the aforementioned inverse spectral problem for our class of weight measures. 
  Concluding, we will show in Section~\ref{s6} that it is always possible to approximate the solution of this inverse problem by Stieltjes strings, which are obtained by cutting off the spectral data in a suitable way.
 
  {\sf A}s it will be used rather frequently, let us recall a special case of the integration by parts formula for Borel measures. For positive $x>0$, it  reads  
    \begin{align}\label{eqnIntParts}
     \int_{(0,x)}  h(t)^\ast g(t) d\upsilon(t) =  h(x)^\ast \int_{(0,x)} g(t) d\upsilon(t) - \int_{0}^{x} h'(t)^\ast \int_{(0,t)} g(s)d\upsilon(s)\, dt,
    \end{align}
   where $\upsilon$ is a Borel measure on $\R$, $h$ is a locally absolutely continuous function on $\R$ and $g$ is a locally integrable (with respect to $\upsilon$) function on $\R$.

\section{A star graph of Krein strings}\label{s2}

To set the stage, let $\G\subseteq\R^2$ be a planar star graph consisting of a central vertex $c$ and finitely many edges of finite length attached to it. 
Note that our graph $\G$ is supposed not to contain the outer vertices, rendering $\G$ non-compact with respect to the topology inherited from $\R^2$. 
The set of all edges will be denoted with $\edges$ and every edge $\edge\in\edges$ is assumed to have finite length $l_\edge>0$. 
For simplicity, each edge $\edge\in\edges$ will be identified with an open interval $I_\edge=(0,l_\edge)$, where zero corresponds to the central vertex $c$. 
Consequently, it is also possible to identify $\G$ with the (disjoint) union of the intervals $I_\edge$, $\edge\in\edges$ augmented by the central vertex $c$. 
In view of this identification, the following notation will be quite convenient: 
Given a function $f$ on the graph $\G$, its restriction to the interval $I_\edge$ (identified with the edge $\edge\in\edges$) is denoted with $f_\edge$. 
Moreover, we will denote with $f_\edge(0)$ the limit of $f_\edge(x)$ as $x\rightarrow 0$ in $I_\edge$ (as long as it exists). 
Finally, we also introduce the abbreviations 
\begin{align}
 \frac{1}{L} & = \sum_{\edge\in\edges} \frac{1}{l_\edge}, & \frac{1}{L_\edge} & = \sum_{\fedge\not=\edge} \frac{1}{l_\fedge} = \frac{1}{L} - \frac{1}{l_\edge}, \quad \edge\in\edges. 
\end{align}

We denote with $C_0(\G)$ the space of all complex-valued, continuous functions on the graph $\G$, which tend to zero near all of the outer vertices.   
Furthermore, the Sobolev space $H_0^1(\G)$ on the graph $\G$ consists of all functions $f\in C_0(\G)$ such that $f_\edge\in H^1(I_\edge)$ for every edge $\edge\in\edges$. 
Equipped with the (definite) inner product 
\begin{align}
 \spr{f}{g}_{H_0^1(\G)} = \sum_{\edge\in\edges} \int_0^{l_\edge} f_\edge'(x) g_\edge'(x)^\ast dx, \quad f,\, g\in H_0^1(\G),
\end{align}
the Sobolev space $H_0^1(\G)$ turns into a reproducing kernel Hilbert space. 
In particular, the function $\repc\in H_0^1(\G)$ such that 
\begin{align}
 \spr{f}{\repc}_{H_0^1(\G)} = f(c), \quad f\in H_0^1(\G),
\end{align}
is given by
\begin{align}
 \repc_\edge(x)  = L \biggl( 1-\frac{x}{l_\edge}\biggr), \quad x\in I_\edge.
\end{align} 
Also observe that one has the decomposition
\begin{align}\label{eqnDecomp}
 H_0^1(\G) = \bigoplus_{\edge\in\edges} H_0^1(I_\edge) \, \oplus \linspan\lbrace \repc\rbrace,
\end{align}
upon regarding the Sobolev spaces $H_0^1(I_\edge)$, $\edge\in\edges$ as subspaces of $H_0^1(\G)$.

As the final preparatory step, we introduce the class $\M(\G)$ of all (non-negative) Borel measures $\omega$ on the graph $\G$, which satisfy the growth condition  
\begin{align}\label{eqnGrowth} 
 \int_\G \repc(x) d\omega(x)  = L \sum_{\edge\in\edges} \int_{I_\edge} \biggl(1-\frac{x}{l_\edge} \biggr) d\omega_\edge(x) < \infty
\end{align}
near the outer vertices of the graph. 
Similarly as above, the quantity $\omega_\edge$ denotes hereby the restriction of $\omega$ to $I_\edge$ (identified with the edge $\edge\in\edges$). 
In this respect, also note that all measures in $\M(\G)$ are regular (in the sense of \cite[Definition~2.15]{ru74}). 

We now turn to the main object of this article; the differential equation 
\begin{align}\label{eqnDE}
 - f'' = g\, \omega
\end{align}
on the graph $\G$, where $\omega$ is some arbitrary fixed measure in $\M(\G)$.  
Of course, it is not obvious at all how this equation has to be understood.
To be precise, for some given function $g$, which is locally integrable with respect to $\omega$, we say the function $f$ is a solution of~\eqref{eqnDE} on the edge $\edge\in\edges$ if $f_\edge$ is locally absolutely continuous with 
\begin{align}\label{eqnDEedge}
 f_\edge'(x) = F_\edge - \int_{(0,x)} g_\edge(t) d\omega_\edge(t),
\end{align}
for some $F_\edge\in \C$ and almost all $x\in I_\edge$. 
In this case, we will denote for definiteness with $f_\edge'$ the unique left-continuous representative of the derivative of $f_\edge$ on $I_\edge$ such that~\eqref{eqnDEedge} holds for all $x\in I_\edge$ and $f_\edge'(0) = F_\edge$. 
Next we say that $f$ is a solution of~\eqref{eqnDE} on the graph $\G$ if it is a solution of~\eqref{eqnDE} on every edge, it is continuous in the central vertex $c$ and furthermore  satisfies the interface condition
\begin{align}\label{eqnIntCond}
 g(c) \omega(\lbrace c\rbrace) + \sum_{\edge\in\edges} f_\edge'(0) & = 0 
\end{align}
there. 
Hereby, the limits in~\eqref{eqnIntCond} are known to exist because of~\eqref{eqnDEedge}. 

The differential equation~\eqref{eqnDE} on the graph $\G$ gives rise to a linear relation $S$ in the Sobolev space $H_0^1(\G)$ defined by 
\begin{align}
 S = \lbrace (f,g)\in H_0^1(\G)\times H_0^1(\G) \,|\, -f'' = g\, \omega \text{ on the graph }\G \rbrace.
\end{align}
Alternatively, we could have also employed a weak formulation of the differential equation, as the following useful characterization shows.

\begin{proposition}\label{propWeakForm}
 Some pair $(f,g)\in H_0^1(\G)\times H_0^1(\G)$ belongs to $S$ if and only if 
 \begin{align}\label{eqnWeakForm}
   \spr{f}{h}_{H_0^1(\G)}= \int_\G g(x) h(x)^\ast d\omega(x)
 \end{align}
 for every $h\in H_0^1(\G)$. 
\end{proposition}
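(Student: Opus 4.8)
The plan is to prove both implications by one and the same computation carried out edge by edge, run in opposite directions, using the integration-by-parts formula~\eqref{eqnIntParts} and the orthogonal decomposition~\eqref{eqnDecomp}. As a preliminary remark I would check that the right-hand side of~\eqref{eqnWeakForm} is finite: by Cauchy--Schwarz every $h\in H_0^1(\G)$ satisfies $|h_\edge(x)|\le (l_\edge-x)^{1/2}\|h_\edge'\|_{L^2(I_\edge)}$ near the outer vertex, so the integrand $g_\edge h_\edge^\ast$ is dominated by a constant multiple of $\repc_\edge$ there, and integrability follows from the growth condition~\eqref{eqnGrowth} built into $\M(\G)$. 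This remark also lets me split the integral as the point-mass contribution $g(c)h(c)^\ast\omega(\lbrace c\rbrace)$ plus $\sum_{\edge\in\edges}\int_{I_\edge}g_\edge h_\edge^\ast\,d\omega_\edge$, keeping the mass at the central vertex, which belongs to no $\omega_\edge$, separate from the edge measures.

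For the forward implication, assume $(f,g)\in S$. On each edge the defining relation~\eqref{eqnDEedge} reads $f_\edge'=f_\edge'(0)-G_\edge$ with $G_\edge(x)=\int_{(0,x)}g_\edge\,d\omega_\edge$, so I would insert this into $\int_0^{l_\edge}f_\edge'(x)h_\edge'(x)^\ast\,dx$. The constant part integrates to $f_\edge'(0)\bigl(h_\edge(l_\edge)-h_\edge(0)\bigr)^\ast=-f_\edge'(0)h(c)^\ast$, since $h_\edge(l_\edge)=0$ and $h_\edge(0)=h(c)$ by continuity at the central vertex, while the term $-\int_0^{l_\edge}G_\edge(x)h_\edge'(x)^\ast\,dx$ becomes $\int_{I_\edge}g_\edge h_\edge^\ast\,d\omega_\edge$ after applying~\eqref{eqnIntParts} and using $h_\edge(l_\edge)=0$ to kill the boundary term. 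Summing over all edges gives $-h(c)^\ast\sum_{\edge\in\edges}f_\edge'(0)+\sum_{\edge\in\edges}\int_{I_\edge}g_\edge h_\edge^\ast\,d\omega_\edge$, and the interface condition~\eqref{eqnIntCond} turns the first sum into $g(c)h(c)^\ast\omega(\lbrace c\rbrace)$, which is precisely~\eqref{eqnWeakForm}.

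For the converse, assume~\eqref{eqnWeakForm} for every $h$ and test first against functions supported on a single edge, i.e. $h\in H_0^1(I_\edge)$ under the identification in~\eqref{eqnDecomp}; then $h(c)=0$ and all other edges drop out, leaving $\int_0^{l_\edge}f_\edge'(x)h_\edge'(x)^\ast\,dx=\int_{I_\edge}g_\edge h_\edge^\ast\,d\omega_\edge$. Rewriting the right-hand side via~\eqref{eqnIntParts} as $-\int_0^{l_\edge}G_\edge(x)h_\edge'(x)^\ast\,dx$ shows that $f_\edge'+G_\edge$ is orthogonal in $L^2(I_\edge)$ to every derivative $h_\edge'$ arising from $h_\edge\in H_0^1(I_\edge)$, that is, to every mean-zero function; hence $f_\edge'+G_\edge$ is almost everywhere equal to a constant $f_\edge'(0)$, which is exactly~\eqref{eqnDEedge}. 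With the edgewise equation established, I would run the forward computation backwards for a test function with $h(c)\neq 0$, say $h=\repc$: comparing the resulting value of $\spr{f}{h}_{H_0^1(\G)}$ with the split right-hand side of~\eqref{eqnWeakForm} forces $\sum_{\edge\in\edges}f_\edge'(0)=-g(c)\omega(\lbrace c\rbrace)$, namely~\eqref{eqnIntCond}. Since $f$ is automatically continuous at $c$ as an element of $C_0(\G)$, this shows $(f,g)\in S$.

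The computations themselves are routine; the one place demanding care is the bookkeeping at the central vertex. One must treat the point mass $\omega(\lbrace c\rbrace)$, which sits at $c$ and is not part of any $\omega_\edge$, separately from the edge measures; handle the left-continuous representative of $f_\edge'$ from~\eqref{eqnDEedge} consistently so that $f_\edge'(0)$ is unambiguous; and verify that the boundary terms produced by~\eqref{eqnIntParts} at the outer vertices really vanish, using $h_\edge(l_\edge)=0$ together with the integrability secured by~\eqref{eqnGrowth}. It is exactly the interplay of these central-vertex boundary contributions with the interface condition~\eqref{eqnIntCond} that makes the two sides of~\eqref{eqnWeakForm} coincide, so I expect that matching to be the main obstacle.
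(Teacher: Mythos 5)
Your forward direction and your recovery of the interface condition are essentially the paper's own argument: integrate by parts edge by edge, let the boundary term at the outer vertex vanish, sum over edges, and convert $\sum_{\edge} f_\edge'(0)$ via \eqref{eqnIntCond}; the paper also establishes \eqref{eqnIntCond} in the converse exactly as you do, by testing with the reproducing kernel $\repc$. The one point you defer there --- that the boundary term really vanishes as $x\rightarrow l_\edge$ --- is carried out in the paper by dominated convergence with the bound $|h_\edge(x)g_\edge(t)\indik_{(0,x)}(t)|\leq(l_\edge-t)\|h\|_{H_0^1(\G)}\|g\|_{H_0^1(\G)}$, which is integrable precisely because of \eqref{eqnGrowth}; since you name exactly these two ingredients, that part is acceptable.

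The converse on a fixed edge is where you take a genuinely different route, and also where your proposal has its only real gap. Writing $G_\edge(x)=\int_{(0,x)}g_\edge(t)\,d\omega_\edge(t)$, you assert that $f_\edge'+G_\edge$ is orthogonal \emph{in $L^2(I_\edge)$} to all mean-zero functions and hence constant. This presupposes $G_\edge\in L^2(I_\edge)$, which is not obvious and not addressed: $\omega_\edge$ may have infinite mass near $l_\edge$, so $G_\edge$ may be unbounded there, and without square-integrability the orthogonality statement (and the appeal to the decomposition $L^2=\C\oplus\lbrace\text{mean zero}\rbrace$) is not meaningful. The claim is in fact true: by Minkowski's integral inequality and $|g_\edge(t)|\leq\sqrt{l_\edge-t}\,\|g\|_{H_0^1(\G)}$ one has
\begin{align*}
 \|G_\edge\|_{L^2(I_\edge)}\leq\int_{I_\edge}|g_\edge(t)|\sqrt{l_\edge-t}\,d\omega_\edge(t)\leq\|g\|_{H_0^1(\G)}\int_{I_\edge}(l_\edge-t)\,d\omega_\edge(t)<\infty
\end{align*}
by \eqref{eqnGrowth}; alternatively, test only against compactly supported $h$ and invoke the du Bois--Reymond lemma, which needs no integrability up to $l_\edge$. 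The paper sidesteps the issue entirely: for fixed $x_0\in I_\edge$ it constructs a single explicit test function supported in $(0,x_0]$, whose derivative is the conjugate of $f_\edge'+G_\edge+C$ with the constant $C$ chosen to make it admissible, so that \eqref{eqnWeakForm} immediately yields $\int_0^{x_0}|f_\edge'+G_\edge+C|^2\,dx=0$. That construction is in effect the localized version of your orthogonality argument: it trades the functional-analytic projection onto constants for a one-line computation with no integrability concerns at the outer vertex. With either of the two patches above, your proof is complete.
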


\begin{proof}
If the pair $(f,g)$ belongs to $S$, then employing the differential equation, respectively~\eqref{eqnDEedge}, on some edge $\edge\in\edges$ and integrating by parts yields  
\begin{align*}
 \int_0^{x} f_\edge'(t) h_\edge'(t)^\ast dt & = f_\edge'(0) (h_\edge(x)^\ast - h(c)^\ast) + \int_{(0,x)} g_\edge(t) h_\edge(t)^\ast d\omega_\edge(t) \\ 
                                                             & \qquad\qquad\qquad\qquad\qquad\qquad\qquad - \int_{(0,x)} g_\edge(t) h_\edge(x)^\ast d\omega_\edge(t)                                                    
\end{align*}
for $x\in I_\edge$ and every $h\in H_0^1(\G)$. 
To see that the last term converges to zero as $x\rightarrow l_\edge$, we may apply Lebesgue's dominated convergence theorem. 
In fact, the integrand converges pointwise to zero as $x\rightarrow l_\edge$ and can be bounded by   
\begin{align*}
 |h_\edge(x) g_\edge(t) \indik_{(0,x)}(t)| & \leq \sqrt{l_\edge-x}\, \|h\|_{H_0^1(\G)} \sqrt{l_\edge-t}\, \|g\|_{H_0^1(\G)}  \indik_{(0,x)}(t) \\
                                                                & \leq (l_\edge-t) \|h\|_{H_0^1(\G)} \|g\|_{H_0^1(\G)}
\end{align*}
 for all $x$, $t\in I_\edge$. 
The limits of the remaining terms are obvious (also note that the function $|gh|$ is always integrable with respect to $\omega$). 
Upon utilizing the interface condition~\eqref{eqnIntCond}, a simple calculation shows that~\eqref{eqnWeakForm} holds for every $h\in H_0^1(\G)$. 

In order to prove the converse, first fix some edge $\edge\in\edges$, let $x_0\in I_\edge$ and consider the function $h\in H_0^1(I_\edge)$ given by 
\begin{align*}
 h_\edge(x)^\ast = \begin{cases} \int_{0}^x \left( f_\edge'(t) + \int_{(0,t)} g_\edge(s)d\omega_\edge(s) + C \right) dt, & x\in (0,x_0], \\ 0, & x\in(x_0,l_\edge),
                              \end{cases}
\end{align*}
where the constant $C$ is necessarily given by   
\begin{align*}
 C = - \frac{1}{x_0} \int_{0}^{x_0} \biggl( f_\edge'(x) + \int_{(0,x)} g_\edge(t) d\omega_\edge(t) \biggr) dx.
\end{align*}
Now we integrate  the right-hand side of~\eqref{eqnWeakForm} by parts to obtain 
\begin{align*}
 0 & = \int_0^{l_\edge} h_\edge'(x)^\ast \biggl( f'_\edge(x) + \int_{(0,x)} g_\edge(t)d\omega_\edge(t) + C \biggr) dx \\
    & =  \int_{0}^{x_0} \biggl| f_\edge'(x) + \int_{(0,x)} g_\edge(t)d\omega_\edge(t) + C \biggr|^2 dx.
\end{align*}
Thus the integrand vanishes almost everywhere on $(0,x_0)$, which shows that $f$ is a solution of $-f'' = g\, \omega$ on the edge $\edge$ since $x_0\in I_\edge$ was arbitrary. 
Finally, after another integration by parts (also using the fact that the differential equation holds on the individual edges), we find that 
 \begin{align*}
   f(c) & = \spr{f}{\repc}_{H_0^1(\G)} = \int_\G g(x) \repc(x) d\omega(x) 
          = f(c) + L \biggl(g(c) \omega(\lbrace c\rbrace) + \sum_{\edge\in\edges} f_\edge'(0)\biggr), 
 \end{align*}
 which establishes the interface condition~\eqref{eqnIntCond} and thus the claim.
\end{proof}

From this alternative characterization one sees that the linear relation $S$ is possibly multi-valued if $\omega$ is not supported on the whole graph.
In fact, some $g\in H_0^1(\G)$ belongs to the multi-valued part of $S$ if and only if it vanishes almost everywhere with respect to $\omega$. 
Nevertheless, the linear relation $S$ turns out to be self-adjoint.

\begin{theorem}\label{thmSA}
The linear relation $S$ is self-adjoint. 
\end{theorem}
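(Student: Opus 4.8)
The plan is to recast the weak formulation from Proposition~\ref{propWeakForm} as the statement that $S$ is the inverse of a bounded, self-adjoint operator, and then to invoke the standard fact that the inverse of such an operator is automatically a self-adjoint linear relation. To this end I would first use Riesz representation to define a bounded operator $B$ on $H_0^1(\G)$ by requiring
\begin{align*}
 \spr{Bg}{h}_{H_0^1(\G)} = \int_\G g(x) h(x)^\ast d\omega(x), \quad h\in H_0^1(\G),
\end{align*}
for every $g\in H_0^1(\G)$. With this notation, \eqref{eqnWeakForm} says precisely that $(f,g)\in S$ if and only if $f=Bg$, so that $S$ coincides with the inverse relation $\lbrace (Bg,g)\,|\, g\in H_0^1(\G)\rbrace$ of $B$.

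The first task is to verify that $B$ is everywhere defined and bounded, and this is where the growth condition~\eqref{eqnGrowth} enters. Combining the pointwise bound $|f_\edge(x)|\leq \sqrt{l_\edge-x}\,\|f\|_{H_0^1(\G)}$ already exploited in the proof of Proposition~\ref{propWeakForm} (together with $|f(c)| = |\spr{f}{\repc}_{H_0^1(\G)}|\leq \sqrt{L}\,\|f\|_{H_0^1(\G)}$ at the central vertex) with the Cauchy--Schwarz inequality yields
\begin{align*}
 \biggl|\int_\G g h^\ast d\omega\biggr| \leq \|g\|_{H_0^1(\G)} \|h\|_{H_0^1(\G)} \biggl( L\,\omega(\lbrace c\rbrace) + \sum_{\edge\in\edges}\int_{I_\edge}(l_\edge - x)\, d\omega_\edge(x)\biggr),
\end{align*}
and the quantity in parentheses is finite precisely because of~\eqref{eqnGrowth} (the finitely many edge integrals $\int_{I_\edge}(l_\edge-x)\,d\omega_\edge = l_\edge\int_{I_\edge}(1-x/l_\edge)\,d\omega_\edge$ are controlled by $\int_\G \repc\,d\omega$). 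Hence $h\mapsto \int_\G g h^\ast d\omega$ is a bounded conjugate-linear functional for each fixed $g$, so $Bg$ exists, and the displayed estimate shows that $B$ is bounded.

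Next I would check that $B$ is self-adjoint. Since $B$ is bounded and defined on all of $H_0^1(\G)$, it suffices to observe that the representing form is Hermitian: taking complex conjugates in the defining relation gives $\spr{Bh}{g}_{H_0^1(\G)} = \int_\G h g^\ast d\omega = (\int_\G g h^\ast d\omega)^\ast = \spr{Bg}{h}_{H_0^1(\G)}^\ast = \spr{g}{Bh}_{H_0^1(\G)}$ for all $g$, $h$, whence $B=B^\ast$. (As a consistency check, $\ker{B}$ consists exactly of those $g$ that vanish $\omega$-almost everywhere, matching the description of the multi-valued part of $S$ noted after Proposition~\ref{propWeakForm}, since $Bg=0$ forces $\int_\G|g|^2 d\omega = \spr{Bg}{g}_{H_0^1(\G)}=0$.)

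Finally, the conclusion would be immediate from the general identity $(B^{-1})^\ast = (B^\ast)^{-1}$ for linear relations: since $S = B^{-1}$ and $B=B^\ast$, one obtains $S^\ast = (B^{-1})^\ast = (B^\ast)^{-1} = B^{-1} = S$. I expect the only genuine work to lie in the boundedness estimate of the second step, where the growth condition~\eqref{eqnGrowth} is exactly what is needed to control the measure $\omega$ near the outer vertices and at the central vertex; the remaining steps are formal manipulations with the sesquilinear form together with the standard theory of (self-adjoint) linear relations.
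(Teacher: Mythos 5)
Your proof is correct and follows essentially the same route as the paper's: there, too, one defines a bounded self-adjoint operator $R$ on $H_0^1(\G)$ by $\spr{Rg}{h}_{H_0^1(\G)} = \int_\G g(x) h(x)^\ast d\omega(x)$ and identifies it with $S^{-1}$ via Proposition~\ref{propWeakForm}, whence $S$ is self-adjoint as the inverse of a bounded self-adjoint operator. The only difference is one of detail: you spell out the boundedness of the form (using the pointwise bounds and the growth condition~\eqref{eqnGrowth}), which the paper merely asserts.
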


\begin{proof}
There is a bounded self-adjoint operator $R$ on $H_0^1(\G)$ such that 
\begin{align*}
  \spr{Rg}{h}_{H_0^1(\G)} = \int_\G g(x) h(x)^\ast d\omega(x), \quad g,\, h\in H_0^1(\G),
\end{align*}
since the right-hand side is a bounded symmetric sesquilinear form on $H_0^1(\G)$. 
In view of Proposition~\ref{propWeakForm}, we may identify $R$ with the inverse of $S$.  
\end{proof}

It remains to introduce the linear relations $S_\edge$ in $H_0^1(I_\edge)$ associated with the differential equation~\eqref{eqnDE} on some edge $\edge\in\edges$, which are given by  
\begin{align}
 S_\edge = \lbrace (f,g)\in H_0^1(I_\edge) \times H_0^1(I_\edge) \,|\, -f'' = g\, \omega \text{ on the edge }\edge \rbrace.  
\end{align} 
Similarly as above, some pair $(f,g)\in H_0^1(I_\edge) \times H_0^1(I_\edge)$ belongs to $S_\edge$ if and only if 
 \begin{align}\label{eqnWeakFormEdge}
   \spr{f}{h}_{H_0^1(I_\edge)}= \int_{I_\edge} g_\edge(x) h_\edge(x)^\ast d\omega_\edge(x)
 \end{align}
 for every $h\in H_0^1(I_\edge)$. 
  In fact, this can be viewed as a special case of Proposition~\ref{propWeakForm}, upon regarding the interval $I_\edge$ as a two-edged star graph. 
 Consequently, the linear relations $S_\edge$ turn out to be self-adjoint as well. 
 Due to the growth restriction~\eqref{eqnGrowth} on $\omega$, the inverses of $S_\edge$ are trace class operators \cite{ka94}, \cite[Proposition~2.3]{ThreeSpectra} with 
\begin{align}\label{eqnTraceEdge}
 \trace{S_\edge^{-1}} = \int_{I_\edge} x \biggl(1-\frac{x}{l_\edge}\biggr) d\omega_\edge(x).
\end{align}
Moreover, these inverses are obviously non-negative since the sesquilinear form on the right-hand side of~\eqref{eqnWeakFormEdge} is positive semidefinite. 

Similar facts also hold for the linear relation $S$ associated with the whole graph. 
In order to be able to write down the trace of the inverse of $S$ in a simple way, we introduce the function $\Trf\in H_0^1(\G)$ such that 
\begin{align}
 \Trf_\edge(x) = L \biggl( 1+ \frac{x}{L_\edge} \biggr) \biggl( 1-\frac{x}{l_\edge} \biggr), \quad x\in I_\edge. 
\end{align}
Let us mention that this function indeed is the diagonal of the Green's function at zero energy, that is, the integral kernel of the operator $S^{-1}$. 

\begin{proposition}\label{propTrace}
 The inverse of  $S$ is a non-negative trace class operator with  
 \begin{align}\label{eqnTraceForm}
  \trace{S^{-1}} = \int_\G \Trf(x) d\omega(x).
 \end{align}
\end{proposition}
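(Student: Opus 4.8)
The plan is to compute the trace directly against an orthonormal basis adapted to the orthogonal decomposition~\eqref{eqnDecomp}, thereby reducing the claim to the already-available edge trace formula~\eqref{eqnTraceEdge} together with a single explicit rank-one contribution. I would start from the bounded operator $R = S^{-1}$ furnished by the proof of Theorem~\ref{thmSA}, which satisfies $\spr{Rg}{h}_{H_0^1(\G)} = \int_\G g(x) h(x)^\ast d\omega(x)$. In particular $\spr{Rg}{g}_{H_0^1(\G)} = \int_\G |g(x)|^2 d\omega(x) \ge 0$, so $R$ is non-negative; for such an operator, being trace class is equivalent to summability of the diagonal entries against any orthonormal basis, and the trace then equals that sum. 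This is the only mechanism I would need.

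Next I would observe that the decomposition~\eqref{eqnDecomp} is in fact orthogonal: the spaces $H_0^1(I_\edge)$ are mutually orthogonal, and each is orthogonal to $\repc$ since $\spr{h}{\repc}_{H_0^1(\G)} = h(c) = 0$ whenever $h\in H_0^1(I_\edge)$. I would therefore assemble an orthonormal basis of $H_0^1(\G)$ from orthonormal bases $\{e_n^\edge\}_n$ of the individual spaces $H_0^1(I_\edge)$, together with the normalized vector $\repc / \|\repc\|_{H_0^1(\G)}$, where $\|\repc\|_{H_0^1(\G)}^2 = \repc(c) = L$ by the reproducing property.

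The trace is then the sum of the corresponding diagonal entries of $R$. For $g\in H_0^1(I_\edge)$ the support of $g$ lies on a single edge, so $\spr{Rg}{g}_{H_0^1(\G)} = \int_{I_\edge}|g|^2 d\omega_\edge = \spr{S_\edge^{-1} g}{g}_{H_0^1(I_\edge)}$; summing over $n$ reproduces $\trace{S_\edge^{-1}}$ as given by~\eqref{eqnTraceEdge}. The remaining diagonal entry of the normalized $\repc$ equals $\frac{1}{L}\int_\G |\repc|^2 d\omega = L\sum_{\edge\in\edges}\int_{I_\edge}(1-x/l_\edge)^2 d\omega_\edge$. Collecting both contributions edge by edge, the resulting integrand factors as
\[
 x\Bigl(1-\frac{x}{l_\edge}\Bigr) + L\Bigl(1-\frac{x}{l_\edge}\Bigr)^2 = \Bigl(1-\frac{x}{l_\edge}\Bigr)\Bigl(x + L - \frac{Lx}{l_\edge}\Bigr),
\]
and it remains to identify this with $\Trf_\edge$.

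The final step is purely algebraic: the defining relation $1/L_\edge = 1/L - 1/l_\edge$ gives $(l_\edge - L)/l_\edge = L/L_\edge$, so that $x + L - Lx/l_\edge = L(1 + x/L_\edge)$ and the integrand is exactly $\Trf_\edge(x)$. This yields~\eqref{eqnTraceForm}, and finiteness (hence the trace class property) comes for free: since $0 \le 1 + x/L_\edge \le 1 + l_\edge/L_\edge$ on $I_\edge$, one has $\Trf_\edge \le (1 + l_\edge/L_\edge)\,\repc_\edge$ pointwise, so $\int_\G \Trf\,d\omega$ is dominated by $\int_\G \repc\,d\omega$, which is finite by the growth condition~\eqref{eqnGrowth}. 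I expect the only point requiring care to be the bookkeeping that legitimately assembles the edgewise traces and the single rank-one term into the full trace of $R$; the rest is a direct computation.
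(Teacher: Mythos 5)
Your strategy is exactly the one the paper uses: exploit the orthogonality of the decomposition~\eqref{eqnDecomp}, reduce the diagonal sum of $S^{-1}$ to the edge traces~\eqref{eqnTraceEdge} plus the single diagonal entry coming from the normalized reproducing kernel $\repc$, and finish with the algebraic identity $x(1-x/l_\edge) + L(1-x/l_\edge)^2 = \Trf_\edge(x)$. The mechanism you invoke is also sound: for a non-negative bounded operator, summability of the diagonal entries against one orthonormal basis implies trace class, and the trace equals that sum; your domination $\Trf_\edge \le (1+l_\edge/L_\edge)\,\repc_\edge$ together with~\eqref{eqnGrowth} then gives finiteness.

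There is, however, one concrete error: your evaluation of the $\repc$-diagonal entry forgets that $\omega$ may have an atom at the central vertex. The graph $\G$ contains $c$, and measures in $\M(\G)$ may (and in this paper frequently do) charge it; this is precisely the quantity $\omega(\lbrace c\rbrace)$ appearing in~\eqref{eqnIntCond} and~\eqref{eqnW}. Since $\repc(c)=L$, the correct evaluation is
\begin{align*}
 \frac{1}{L}\int_\G |\repc(x)|^2 d\omega(x) = L\,\omega(\lbrace c\rbrace) + L\sum_{\edge\in\edges}\int_{I_\edge}\biggl(1-\frac{x}{l_\edge}\biggr)^2 d\omega_\edge(x),
\end{align*}
as in the paper's proof, not merely the sum over the open edges. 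As written, your argument establishes $\trace{S^{-1}} = \sum_{\edge\in\edges}\int_{I_\edge}\Trf_\edge(x)\, d\omega_\edge(x)$, which agrees with the claimed $\int_\G \Trf(x)\, d\omega(x)$ only when $\omega(\lbrace c\rbrace)=0$, because the right-hand side of~\eqref{eqnTraceForm} also contains the term $\Trf(c)\,\omega(\lbrace c\rbrace) = L\,\omega(\lbrace c\rbrace)$. The repair is immediate: keep the atomic contribution in the diagonal entry and note $\Trf(c)=L$, so the two extra terms match exactly. But without this, the displayed equality in your step evaluating the $\repc$ entry is simply false for weights with positive mass at the central vertex, which is an essential case throughout the paper.
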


\begin{proof}
First one observes that for every edge $\edge\in\edges$ and $g\in H_0^1(I_\edge)$ we have  
\begin{align*}
 \spr{S^{-1} g}{g}_{H_0^1(\G)} = \int_{I_\edge} g_{\edge}(x) g_{\edge}(x)^\ast d\omega_\edge(x) = \spr{S_\edge^{-1} g}{g}_{H_0^1(I_\edge)}.
\end{align*}
Since the operators $S_\edge^{-1}$ are trace class, we infer that $S^{-1}$ is trace class as well with
\begin{align*}
 \trace{S^{-1}}  = \frac{\spr{S^{-1} \repc}{\repc}_{H_0^1(\G)}}{\|\repc\|_{H_0^1(\G)}^2} + \sum_{\edge\in\edges} \trace{S_\edge^{-1}},
\end{align*}
in view of the decomposition~\eqref{eqnDecomp}. 
Upon employing~\eqref{eqnTraceEdge} as well as evaluating   
\begin{align*}
 \frac{\spr{S^{-1} \repc}{\repc}_{H_0^1(\G)}}{\|\repc\|_{H_0^1(\G)}^2} = L\, \omega(\lbrace c \rbrace) + L \sum_{\edge\in\edges} \int_{I_\edge} \biggl( 1-\frac{x}{l_\edge}\biggr)^2 d\omega_\edge(x),
\end{align*}
it is a simple calculation to derive the trace formula~\eqref{eqnTraceForm}. 
Finally, the inverse of $S$ is non-negative because the sesquilinear form on the right-hand side of~\eqref{eqnWeakForm} is positive semidefinite.  
\end{proof}

\section{The coupling matrices}\label{s3}

Before we state the solution of the inverse problem, we first need to derive a few necessary conditions and to introduce additional spectral quantities. 
To this end, we fix again some weight measure $\omega\in\M(\G)$ and denote all associated quantities as in the preceding section.
The condition~\eqref{eqnGrowth} on the growth of $\omega$ guarantees the existence of solutions to the differential equation~\eqref{eqnDE} on every edge $\edge\in\edges$, which vanish at the outer vertex; see for example \cite[Section~2]{ThreeSpectra}, \cite[Theorem~9]{ka67}.

\begin{theorem}\label{thmPhiE}
 For every edge $\edge\in\edges$ and $z\in\C$ there is a unique solution $\phi_\edge(z,\cdot\,)$ of the differential equation $-f'' = z f \, \omega$ on the edge $\edge$ such that
 \begin{align}
  \phi_\edge(z,x) & \sim \biggl(1 - \frac{x}{l_\edge}\biggr), &  \phi_\edge'(z,x) & \sim - \frac{1}{l_\edge},
 \end{align} 
 as $x\rightarrow l_\edge$ in $I_\edge$. Moreover, for each $x\in [0,l_\edge)$, the functions $\phi_\edge(\,\cdot\,,x)$ and $\phi_\edge'(\,\cdot\,,x)$ are real entire and of exponential type zero.
\end{theorem}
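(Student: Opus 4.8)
The plan is to recast the problem on each edge as a Volterra integral equation anchored at the outer vertex, solve it by successive approximations, and read off both the analyticity in $z$ and the growth type from the resulting power series.

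First I would note that, in view of the definition~\eqref{eqnDEedge} of a solution on the edge together with the integration by parts formula~\eqref{eqnIntParts}, a function $\phi_\edge(z,\cdot\,)$ solves $-f''=zf\,\omega$ on $\edge$ with the prescribed behaviour as $x\to l_\edge$ if and only if it satisfies
\begin{align*}
 \phi_\edge(z,x) = \biggl(1-\frac{x}{l_\edge}\biggr) - z\int_{(x,l_\edge)} (t-x)\,\phi_\edge(z,t)\,d\omega_\edge(t), \quad x\in I_\edge.
\end{align*}
Indeed, differentiating the right-hand side (the boundary contribution vanishes because the kernel $t-x$ vanishes at $t=x$) gives $\phi_\edge'(z,x)=-1/l_\edge + z\int_{(x,l_\edge)}\phi_\edge(z,t)\,d\omega_\edge(t)$, so that~\eqref{eqnDEedge} holds, while the growth condition~\eqref{eqnGrowth} forces both integrals to tend to zero as $x\to l_\edge$, which yields exactly the two asymptotics.

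Next I would solve this equation by the iteration $\phi_\edge=\sum_{n\ge 0}\phi_n$ with $\phi_0(x)=1-x/l_\edge$ and $\phi_{n+1}(x)=-z\int_{(x,l_\edge)}(t-x)\phi_n(t)\,d\omega_\edge(t)$. Writing $\Phi(x)=\int_{(x,l_\edge)}(l_\edge-t)\,d\omega_\edge(t)$, which is finite by~\eqref{eqnGrowth} and non-increasing with $-d\Phi(t)=(l_\edge-t)\,d\omega_\edge(t)$, an induction using $t-x\le l_\edge-x$ and $\int_{(x,l_\edge)}\Phi(t)^n(-d\Phi(t))\le\Phi(x)^{n+1}/(n+1)$ gives
\begin{align*}
 |\phi_n(x)| \le \frac{l_\edge-x}{l_\edge}\,\frac{|z|^n\,\Phi(x)^n}{n!}.
\end{align*}
Hence the series converges absolutely and locally uniformly in $(z,x)$, each $\phi_n(\,\cdot\,,x)$ is a monomial of degree $n$ in $z$ with real coefficient (as $\omega_\edge$ and the kernel are real), and $\phi_\edge(\,\cdot\,,x)$ is therefore real entire; the analogous series for $\phi_\edge'(\,\cdot\,,x)$ settles that function too. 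Uniqueness is immediate from the Volterra structure: the difference of two solutions satisfies the homogeneous equation and the same bound with $\phi_0\equiv 0$, hence vanishes identically.

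The main obstacle is the claim of exponential type \emph{zero}. The estimate above only yields $|\phi_\edge(z,x)|\le \E^{|z|\Phi(0)}$, i.e.\ finite type, whereas type zero reflects the order $\le 1/2$ growth characteristic of strings. To capture it I would estimate the iterated kernel more sharply: expanding the $z^n$-coefficient as an integral over the simplex $x<t_1<\cdots<t_n<l_\edge$ of the product of consecutive gaps $(t_1-x)(t_2-t_1)\cdots(t_n-t_{n-1})(l_\edge-t_n)/l_\edge$, whose $n+1$ gaps sum to $l_\edge-x$, an application of the inequality between the geometric and arithmetic means on the gaps produces an extra factorial and improves the bound to one of the form $C\,B^n/(n!)^2$, forcing order $1/2$. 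The delicate point, which is exactly why this does not reduce to the naive estimate, is that $\omega_\edge$ may carry infinite mass near $l_\edge$, so the geometric–arithmetic mean gain on the gaps must be balanced against the $(l_\edge-t)$ weighting that guarantees convergence, rather than estimated separately. Alternatively, this is precisely the content of the classical Krein string estimates, and I would simply invoke \cite{ka67}, \cite{ThreeSpectra}.
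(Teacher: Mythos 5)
Your Volterra setup is sound: the integral equation anchored at $l_\edge$, the iteration bound $|\phi_n(x)|\le \frac{l_\edge-x}{l_\edge}\frac{|z|^n\Phi(x)^n}{n!}$, the resulting entirety and realness in $z$, the asymptotics at $l_\edge$, and the uniqueness argument are all correct; this is essentially the construction behind \cite[Theorem~2.1]{ThreeSpectra} and \cite[Theorem~9]{ka67}, which is exactly what the paper cites in place of a proof. The genuine gap is in the one step you yourself flag as delicate: the claim that a refined simplex/AM--GM estimate yields Taylor coefficients of size $CB^n/(n!)^2$. No such bound can hold for general $\omega\in\M(\G)$. Indeed, $|c_n|\le CB^n/(n!)^2$ would give $|\phi_\edge(z,x)|\le C'\E^{2\sqrt{B|z|}}$, hence by Jensen's formula at most $\OO(\sqrt{r})$ zeros in $|z|\le r$. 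But the zeros of $\phi_\edge(\,\cdot\,,x)$ are the eigenvalues of the string on $(x,l_\edge)$ with a Dirichlet condition at $x$, and their counting function obeys the Weyl lower bound $\liminf_{r\to\infty} N(r)/\sqrt{r}\ge \pi^{-1}\int_x^{l_\edge}\sqrt{\rho(t)}\,dt$, where $\rho$ is the density of the absolutely continuous part of $\omega_\edge$. The growth condition~\eqref{eqnGrowth} does not make this ``travel time'' finite: for instance $d\omega_\edge(t)=(l_\edge-t)^{-2}\log^{-2}\bigl(\E l_\edge/(l_\edge-t)\bigr)\,dt$ satisfies~\eqref{eqnGrowth} while $\int\sqrt{\rho}\,dt=\infty$, so $N(r)/\sqrt{r}\to\infty$ and no estimate $\OO(\E^{2\sqrt{B|z|}})$ can hold (more generally, the order of $\phi_\edge(\,\cdot\,,x)$ for weights in $\M(\G)$ can be any number in $[1/2,1)$). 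So no balancing of the AM--GM gain against the $(l_\edge-t)$ weighting can rescue the $(n!)^{-2}$ decay; the target inequality is simply false, even though the conclusion (exponential type zero) is true.

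What does work --- and is the actual content of the cited results --- is a splitting argument that uses your two estimates in tandem rather than trying to improve one of them. Fix $x$ and $y\in(x,l_\edge)$. Your naive bound, applied on $(y,l_\edge)$, shows that $\phi_\edge(\,\cdot\,,y)$ and $\phi_\edge'(\,\cdot\,,y)$ are entire of exponential type at most $\Phi(y)$. The string on $[x,y]$ has finite total mass (by~\eqref{eqnGrowth}, $\omega_\edge$ is finite away from $l_\edge$), so on $[x,y]$ the simplex/AM--GM estimate \emph{is} valid and shows that the transfer matrix propagating solution data from $y$ to $x$ has entries of order at most $1/2$, in particular of exponential type zero. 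Writing $\phi_\edge(z,x)$ as a linear combination of $\phi_\edge(z,y)$ and $\phi_\edge'(z,y)$ with these entries as coefficients shows that its exponential type is at most $\Phi(y)$; since $\Phi(y)\to 0$ as $y\to l_\edge$ while $x$ stays fixed, the type is zero. Your final fallback of invoking \cite{ka67}, \cite{ThreeSpectra} is of course legitimate --- it is what the paper itself does --- but the direct argument you sketch for type zero is not repairable as stated.
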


Some complex $\mu\in\C$ is an eigenvalue of the linear relation $S_\edge$ if and only if it is a zero of the entire function $\phi_\edge(\,\cdot\,,0)$. 
In fact, this function is indeed the characteristic function of the linear relation $S_\edge$ (see \cite[Theorem~2.4]{ThreeSpectra}), that is,  
\begin{align}
 \phi_\edge(z,0) = \det{I-zS_\edge^{-1}} = \prod_{\mu\in\sigma(S_\edge)} \biggl( 1-\frac{z}{\mu}\biggr), \quad z\in\C.
\end{align}
For the linear relation $S$ on the whole graph, a similar role is played by the real entire function $W$, which is given by 
\begin{align}\label{eqnW}
  - W(z) = L \biggl(  \omega(\lbrace c\rbrace) z  + \sum_{\edge\in\edges} \frac{\phi_\edge'(z,0)}{\phi_\edge(z,0)}  \biggr) \prod_{\edge\in\edges} \phi_\edge(z,0), \quad z\in\C.
\end{align}
Observe that in the case of a graph with two edges, this function reduces (up to a scalar multiple) to the usual Wronskian of the solutions given in Theorem~\ref{thmPhiE}. 
Before we show that $W$ is the characteristic function of the linear relation $S$ indeed, we mention the following useful identity 
\begin{align}\label{eqnDotPhi}
 - \dot{\phi}_\edge(\mu,0) \phi_\edge'(\mu,0) = \int_{I_\edge} |\phi_\edge(\mu,x)|^2 d\omega_\edge(x)\not=0, \quad \mu\in\sigma(S_\edge), 
\end{align}
where the dot denotes differentiation with respect to the spectral parameter. 
This relation can be deduced immediately from \cite[Lemma~2.2]{ThreeSpectra}.

\begin{proposition}\label{propMult}
 The entire function $W$ is the characteristic function of the linear relation $S$, that is,  
 \begin{align}\label{eqnWProd}
  W(z) = \det{I-zS^{-1}} = \prod_{\lambda\in\sigma(S)} \biggl( 1-\frac{z}{\lambda} \biggr)^{\kappa_\lambda}, \quad z\in\C,
 \end{align} 
 where the multiplicity $\kappa_\lambda$ of every eigenvalue $\lambda\in\sigma(S)$ is given by 
 \begin{align}\label{eqnkappalambda}
  \kappa_\lambda = \begin{cases} 1, & \text{if }\phi_\edge(\lambda,0)\not=0 \text{ for all edges }\edge\in\edges, \\ k - 1, & \text{if }\phi_\edge(\lambda,0) = 0 \text{ for precisely }k\text{ edges }\edge\in\edges. \end{cases}
 \end{align} 
\end{proposition}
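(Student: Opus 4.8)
The plan is to recognize $W$ as the Fredholm determinant $\det{I-zS^{-1}}$ by comparing the two sides as entire functions of exponential type zero with prescribed zeros. First I would expand the definition \eqref{eqnW} into the manifestly entire form
\[
 -W(z) = L\Bigl(\omega(\lbrace c\rbrace)\,z\prod_{\edge\in\edges}\phi_\edge(z,0) + \sum_{\edge\in\edges}\phi_\edge'(z,0)\prod_{\fedge\not=\edge}\phi_\fedge(z,0)\Bigr),
\]
which exhibits $W$ as a finite sum of products of the functions $\phi_\edge(\,\cdot\,,0)$ and $\phi_\edge'(\,\cdot\,,0)$; by Theorem~\ref{thmPhiE} these are real and of exponential type zero, hence so is $W$, and evaluating at $z=0$ (where $\phi_\edge(0,x)=1-x/l_\edge$) gives $W(0)=1$. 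On the other hand, by Proposition~\ref{propTrace} the operator $S^{-1}$ is non-negative and trace class, so $\det{I-zS^{-1}}$ is the convergent genus-zero canonical product $\prod_\lambda(1-z/\lambda)$ over $\sigma(S)$, in which the multiplicity of each $\lambda$ is its geometric multiplicity because $S$ is self-adjoint by Theorem~\ref{thmSA}.

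The next step is to compute $\sigma(S)$ together with these geometric multiplicities directly. Any eigenfunction $f$ of $S$ for the eigenvalue $\lambda$ must coincide with $c_\edge\,\phi_\edge(\lambda,\cdot)$ on each edge, this being the only solution that vanishes at the outer vertex, and is constrained by continuity $c_\edge\phi_\edge(\lambda,0)=f(c)$ at $c$ together with the interface condition \eqref{eqnIntCond}, which here reads $\lambda f(c)\,\omega(\lbrace c\rbrace)+\sum_\edge c_\edge\phi_\edge'(\lambda,0)=0$. If $\phi_\edge(\lambda,0)\not=0$ for every edge, then $c_\edge=f(c)/\phi_\edge(\lambda,0)$ and the interface condition becomes $f(c)\,h(\lambda)=0$ with $h(z)=\omega(\lbrace c\rbrace)z+\sum_\edge \phi_\edge'(z,0)/\phi_\edge(z,0)$; thus $\lambda\in\sigma(S)$ exactly when $h(\lambda)=0$, with a one-dimensional eigenspace. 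If instead $\phi_\edge(\lambda,0)=0$ for precisely $k$ edges, continuity forces $f(c)=0$ and $c_\edge=0$ on all the remaining edges, leaving the $k$ coefficients on the vanishing edges free subject to the single relation $\sum c_\edge\phi_\edge'(\lambda,0)=0$; since $\phi_\edge'(\lambda,0)\not=0$ on those edges by \eqref{eqnDotPhi}, this relation is nontrivial and the eigenspace has dimension $k-1$. This reproduces the multiplicities in \eqref{eqnkappalambda}.

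It then remains to check that $W$ vanishes at each $\lambda\in\sigma(S)$ to precisely this order. In the first case I would differentiate, using the identity
\[
 \frac{d}{dz}\frac{\phi_\edge'(z,0)}{\phi_\edge(z,0)} = \frac{1}{\phi_\edge(z,0)^2}\int_{I_\edge}\phi_\edge(z,x)^2 d\omega_\edge(x),
\]
obtained by the Lagrange-type computation underlying \eqref{eqnDotPhi}, to get $h'(\lambda)=|f(c)|^{-2}\int_\G |f|^2 d\omega$ for the eigenfunction $f$; the right-hand side is strictly positive because $\|f\|_{H_0^1(\G)}^2=\lambda\int_\G |f|^2 d\omega>0$ by \eqref{eqnWeakForm}, so $h$, and hence $W$, has a simple zero at $\lambda$. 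In the second case, grouping the expanded sum for $-W$ according to its lowest-order factor shows that every term vanishes to order at least $k-1$ at $\lambda$ and that the coefficient of $(z-\lambda)^{k-1}$ is a nonzero multiple of $\sum_\edge \phi_\edge'(\lambda,0)/\dot\phi_\edge(\lambda,0)$ summed over the $k$ vanishing edges; by \eqref{eqnDotPhi} each summand is strictly negative, so no cancellation occurs and the order of the zero is exactly $k-1$.

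Finally, since $W$ is a real entire function of exponential type zero whose zeros, counted with multiplicity, are precisely the eigenvalues of $S$ and satisfy $\sum 1/\lambda=\trace{S^{-1}}<\infty$, the Hadamard factorization theorem forces $W(z)=W(0)\prod_\lambda(1-z/\lambda)^{\kappa_\lambda}=\det{I-zS^{-1}}$, the exponential factor being trivial as both sides have exponential type zero. I expect the main obstacle to be the multiplicity bookkeeping in the second case: one must show at once that the interface condition removes exactly one dimension from the eigenspace and that the leading coefficient of $W$ does not accidentally vanish, and both rely crucially on the strict sign information carried by \eqref{eqnDotPhi}.
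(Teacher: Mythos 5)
Your proposal is correct and follows essentially the same route as the paper: the same eigenfunction analysis (restriction to multiples of $\phi_\edge(\lambda,\cdot\,)$, continuity plus interface condition, giving multiplicity $k-1$ from one nontrivial linear constraint), the same reliance on the sign information in \eqref{eqnDotPhi} to rule out cancellation, and the same conclusion via exponential type zero and $W(0)=1$. The only cosmetic deviations are that you extract the leading Taylor coefficient of the expanded product where the paper computes the residue \eqref{eqnStar} of the Herglotz sum, and that you get strict positivity of $h'(\lambda)$ directly from \eqref{eqnWeakForm} where the paper integrates by parts to produce $\int_0^{l_\edge}|\phi_\edge'|^2\,dx$; both are equivalent computations.
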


\begin{proof} 
 First, suppose that $\lambda\in\C$ is such that $\edges_\lambda$ is empty, where we introduced the set $\edges_\lambda = \lbrace \edge\in\edges \,|\, \phi_\edge(\lambda,0) = 0\rbrace$ for notational simplicity. 
 If $\lambda$ is an eigenvalue of the linear relation $S$, then every associated eigenfunction $\psi$ is necessarily of the form 
\begin{align}\label{eqnPsiE}
 \psi_\edge(x) = \alpha_\edge \phi_\edge(\lambda,x), \quad x\in I_\edge,~\edge\in\edges, 
\end{align}
for some $\alpha_\edge\in\C$, $\edge\in\edges$. 
Since $\edges_\lambda$ is empty, we infer that $\psi(c)\not=0$ as well as
\begin{align}\label{eqnPsiC}
 \psi_\edge(x) = \frac{\psi(c)}{\phi_\edge(\lambda,0)} \phi_\edge(\lambda,x), \quad x\in I_\edge,~\edge\in\edges, 
\end{align}
because $\psi$ is continuous at the central vertex. 
 Thus we see that the eigenvalue $\lambda$ is simple. 
 Furthermore, since $\psi$ has to satisfy an interface condition similar to the one in~\eqref{eqnIntCond} at the central vertex, we have  
 \begin{align}\label{eqnWIntCond}
  \lambda\, \omega(\lbrace c\rbrace) + \sum_{\edge\in\edges} \frac{\phi_\edge'(\lambda,0)}{\phi_\edge(\lambda,0)} = 0,
 \end{align} 
 and hence $W(\lambda)=0$. 
 Conversely, if $\lambda$ is a zero of the entire function $W$, then~\eqref{eqnW} gives~\eqref{eqnWIntCond}  and thus every function $\psi$ given by~\eqref{eqnPsiC} with some non-zero $\psi(c)$
 is an eigenfunction of the linear relation $S$ with eigenvalue $\lambda$. 
 In order to show that $\lambda$ is a simple zero of $W$, one first observes that for every edge $\edge\in\edges$ one gets    
 \begin{align}\label{eqnPhiHN}
   \frac{\phi_\edge'(z,0)}{\phi_\edge(z,0)} - \frac{\phi_\edge'(z^\ast,0)}{\phi_\edge(z^\ast,0)}  = (z-z^\ast) \int_{I_\edge} \left| \frac{\phi_\edge(z,x)}{\phi_\edge(z,0)} \right|^2 d\omega_\edge(x), \quad z\in\C\backslash\sigma(S_\edge), 
 \end{align}
 upon employing the Lagrange identity.  
 Consequently, we find that 
 \begin{align*}
  \left.\frac{\partial}{\partial z} \frac{\phi_\edge'(z,0)}{\phi_\edge(z,0)} \right|_{z=\lambda} = \lim_{z\rightarrow\lambda} \int_{I_\edge} \left| \frac{\phi_\edge(z,x)}{\phi_\edge(z,0)} \right|^2 d\omega_\edge(x) = \int_{I_\edge} \left| \frac{\phi_\edge(\lambda,x)}{\phi_\edge(\lambda,0)} \right|^2 d\omega_\edge(x),
 \end{align*}
 where  necessary bounds on the integrands can be found in the proof of \cite[Theorem 2.1]{ThreeSpectra}. 
    Now the derivative of the function in brackets in~\eqref{eqnW} at $\lambda$ is given by 
    \begin{align*}
     \omega(\lbrace c\rbrace) + \sum_{\edge\in\edges} \left.\frac{\partial}{\partial z} \frac{\phi_\edge'(z,0)}{\phi_\edge(z,0)} \right|_{z=\lambda} = \frac{1}{\lambda} \sum_{\edge\in\edges} \int_0^{l_\edge} \left| \frac{\phi_\edge'(\lambda,x)}{\phi_\edge(\lambda,0)}\right|^2 dx 
    \end{align*}
   and thus strictly positive.
   Hereby, we employed the identities   
  \begin{align*}
    \lambda \int_{I_\edge} \left| \frac{\phi_\edge(\lambda,x)}{\phi_\edge(\lambda,0)} \right|^2 d\omega_\edge(x) =  \frac{\phi_\edge'(\lambda,0)}{\phi_\edge(\lambda,0)} + \int_0^{l_\edge} \left| \frac{\phi_\edge'(\lambda,x)}{\phi_\edge(\lambda,0)} \right|^2 dx, \quad \edge\in\edges,
 \end{align*}    
 which are obtained by an integration by parts, as well as the interface condition~\eqref{eqnWIntCond}. 
 But this ensures that $\lambda$ is a simple zero of $W$.  

Next, suppose that the set $\edges_\lambda$ is not empty. 
If $\lambda$ is an eigenvalue of $S$ and $\psi$ is an associated eigenfunction, then the coefficients in~\eqref{eqnPsiE} necessarily satisfy
\begin{align}\label{eqnalphacond}
 \sum_{\edge\in\edges_\lambda} \alpha_\edge \phi_\edge'(\lambda,0) & = 0, & \alpha_\edge = 0, \quad\edge\not\in\edges_\lambda. 
\end{align}
 In this case, the set $\edges_\lambda$ contains at least two edges since otherwise we would obtain a contradiction in the first equation of~\eqref{eqnalphacond}. 
Therefore, the product on the right-hand side in~\eqref{eqnW} has at least a double zero in $\lambda$ and hence $W$ vanishes in $\lambda$. 
Moreover, since all coefficients $\alpha_\edge\in\C$, $\edge\in\edges$ which satisfy~\eqref{eqnalphacond} give rise to an eigenfunction $\psi$ given by~\eqref{eqnPsiE}, we see that the multiplicity $\kappa_\lambda$ of the eigenvalue $\lambda$ is given as in the claim. 
Conversely, if $\lambda$ is a zero of $W$, then note that the residue 
 \begin{align}\label{eqnStar}
  \lim_{z\rightarrow\lambda} (z-\lambda) \sum_{\edge\in\edges} \frac{\phi_\edge'(z,0)}{\phi_\edge(z,0)} = \sum_{\edge\in\edges_\lambda} \frac{\phi_\edge'(\lambda,0)}{\dot{\phi}_\edge(\lambda,0)} = - \sum_{\edge\in\edges_\lambda} \frac{1}{\lambda} \frac{\|\phi_\edge(\lambda,\cdot\,)\|_{H_0^1(I_\edge)}^2}{\dot{\phi}_\edge(\lambda,0)^{2}} 
 \end{align}
 is strictly negative, where we employed equation~\eqref{eqnDotPhi} and~\eqref{eqnWeakFormEdge}. 
 In view of~\eqref{eqnW}, this shows that the multiplicity of the zero $\lambda$ is one less than its multiplicity as a zero of the product on the right-hand side of~\eqref{eqnW}, that is, equal to $\kappa_\lambda$. 
 As a consequence, at least two edges belong to $\edges_\lambda$ and hence an eigenfunction $\psi$ of $S$ can be simply provided by choosing an arbitrary solution of~\eqref{eqnalphacond}. 

 In conclusion, we have shown that the zeros of $W$ are precisely the eigenvalues of $S$ with the multiplicities being the same and given as in the claim.
 This immediately gives the second equality in~\eqref{eqnWProd} and also the first one upon noting that $W$ is of exponential type zero by Theorem~\ref{thmPhiE} with $W(0)=1$.  
\end{proof}

As already mentioned, we will need one more necessary condition on the spectra of our star graph of Krein strings, which will be given in the following proposition. 
In order to state it, recall that a Herglotz--Nevanlinna function is an analytic function which maps the open upper complex half-plane into itself or is a real constant. 
For further information regarding this kind of functions, we refer for example to~\cite[Chapter~VI]{akgl93}, \cite[Section~2]{gets00}, \cite[Section~7.2]{le96}, \cite[Chapter~5]{roro94}.

\begin{proposition}\label{propGHN}
 The meromorphic function $G$, given by 
 \begin{align}\label{eqnGDC}
    G(z) = \frac{L}{W(z)} \prod_{\edge\in\edges} \phi_\edge(z,0), \quad z\in\C\backslash\sigma(S),
 \end{align}
 is a Herglotz--Nevanlinna function.
\end{proposition}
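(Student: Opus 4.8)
The plan is to recognize $G$ as the negative reciprocal of a Herglotz--Nevanlinna function that decomposes into elementary Herglotz--Nevanlinna summands. First I would substitute the definition~\eqref{eqnW} of $W$ into~\eqref{eqnGDC}; the product $\prod_{\edge\in\edges}\phi_\edge(z,0)$ then cancels, leaving the compact identity
\begin{align*}
 G(z) = - \frac{1}{m(z)}, \qquad m(z) = \omega(\lbrace c\rbrace)\,z + \sum_{\edge\in\edges} \frac{\phi_\edge'(z,0)}{\phi_\edge(z,0)},
\end{align*}
valid for $z\in\C\backslash\R$. Since $S$ is self-adjoint by Theorem~\ref{thmSA}, its spectrum is real, so $G$ is analytic off the real axis and it suffices to verify the Herglotz--Nevanlinna property on the open upper half-plane.

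Next I would show that $m$ itself is a Herglotz--Nevanlinna function, handling its summands separately. The term $\omega(\lbrace c\rbrace)\,z$ is Herglotz--Nevanlinna because $\omega(\lbrace c\rbrace)\geq 0$. For the remaining quotients I would invoke~\eqref{eqnPhiHN}: as $\phi_\edge(\,\cdot\,,0)$ and $\phi_\edge'(\,\cdot\,,0)$ are real entire, one has $\phi_\edge'(z^\ast,0)/\phi_\edge(z^\ast,0) = (\phi_\edge'(z,0)/\phi_\edge(z,0))^\ast$, so that~\eqref{eqnPhiHN} yields
\begin{align*}
 \im \frac{\phi_\edge'(z,0)}{\phi_\edge(z,0)} = \im(z) \int_{I_\edge} \biggl| \frac{\phi_\edge(z,x)}{\phi_\edge(z,0)} \biggr|^2 d\omega_\edge(x) \geq 0
\end{align*}
for $\im(z)>0$. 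Since each such quotient is analytic on the upper half-plane (its poles are the real zeros of $\phi_\edge(\,\cdot\,,0)$), every summand maps the upper half-plane into its closure, and hence so does the finite sum $m$.

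Finally I would use the standard fact that the negative reciprocal of a Herglotz--Nevanlinna function is again Herglotz--Nevanlinna, which follows from $\im(-1/m) = \im(m)/|m|^2\geq 0$; unless $m$ reduces to a real constant, the maximum principle gives $\im(m)>0$ throughout the upper half-plane, so $m$ has no zeros there and $-1/m$ is analytic. The steps requiring the most care are the algebraic cancellation producing the form $G=-1/m$ and the bookkeeping ensuring that the apparent singularities of $m$ at zeros of the individual $\phi_\edge(\,\cdot\,,0)$ all lie on the real axis and are therefore harmless; once these are settled the Herglotz--Nevanlinna property transfers immediately from $m$ to $G$.
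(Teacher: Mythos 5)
Your proof is correct and takes essentially the same route as the paper: the paper's one-line proof records exactly your identity $-G(z)^{-1}=\omega(\lbrace c\rbrace)z+\sum_{\edge\in\edges}\phi_\edge'(z,0)/\phi_\edge(z,0)$ as~\eqref{eqnGinvExp} and observes that each summand is a Herglotz--Nevanlinna function, citing~\eqref{eqnPhiHN}. The additional steps you supply---the cancellation of $\prod_{\edge\in\edges}\phi_\edge(z,0)$ against~\eqref{eqnW}, the sign computation of the imaginary parts, and the standard passage from $m$ to $-1/m$---are precisely the details the paper leaves to the reader.
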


\begin{proof}
 Just observe that one has the expansion 
 \begin{align}\label{eqnGinvExp}
      - G(z)^{-1}= \omega(\lbrace c\rbrace) z + \sum_{\edge\in\edges} \frac{\phi_\edge'(z,0)}{\phi_\edge(z,0)}, \quad z\in\C\backslash\R,
 \end{align}
 where each summand is a Herglotz--Nevanlinna function; cf.\ \eqref{eqnPhiHN}.
\end{proof}

It is known (see for example \cite[Theorem~2.1]{gesi99}, \cite[Theorem~27.2.1]{le96}) that the zeros and poles of any meromorphic Herglotz--Nevanlinna function are necessarily simple and interlacing. 
Thus Proposition~\ref{propGHN} shows that the merged spectra $\sigma(S_\edge)$, $\edge\in\edges$ interlace the spectrum $\sigma(S)$ after suitable cancelation. 
 Moreover, there must not be a zero of the function~\eqref{eqnGDC} which is less or equal to the smallest eigenvalue of $S$, that is, one has for every edge $\edge\in\edges$ 
 \begin{align}\label{eqnSmallEV}
 \inf\sigma(S) < \inf\sigma(S_\edge).
 \end{align} 
 Finally, let us only briefly mention that in combination with Proposition~\ref{propMult}, it is also possible to infer restrictions on the multiplicity of two consecutive eigenvalues of $S$; cf.\ \cite[Corollary~2.6]{pirotr13}. 

 We already mentioned that the spectra are in general not enough to obtain a unique solution to our inverse spectral problem. 
 For this reason, we have to introduce additional spectral quantities; cf.\ \cite[Corollary~5.2]{ThreeSpectra}.
 To this end, we set  
 \begin{align}\label{eqnedgeslambda}
  \edges_\lambda = \lbrace \edge\in\edges \,|\, \phi_\edge(\lambda,0) = 0 \rbrace
 \end{align}
 (as in the proof of Proposition~\ref{propMult}) and introduce the matrix
\begin{align}\label{eqnCoupMatrixdef}
 \Gamma_\lambda = \left( \Gamma_{\lambda,\edge\fedge} \right)_{\edge, \fedge\in\edges_\lambda} = \left( \frac{\|\phi_\edge(\lambda,\cdot\,)\|_{H_0^1(I_\edge)}^2}{\|\phi_\fedge(\lambda,\cdot\,)\|_{H_0^1(I_\fedge)}^2} \frac{\dot{\phi}_\fedge(\lambda,0)^2}{\dot{\phi}_\edge(\lambda,0)^2}  \right)_{\edge, \fedge\in\edges_\lambda}
\end{align}
for every eigenvalue $\lambda\in\sigma(S)$ for which the set $\edges_\lambda$ is not empty. 
Obviously, the matrix  $\Gamma_\lambda$ belongs to the class $\CoupM(\edges_\lambda)$ of real matrices, which is defined by     
\begin{align}\label{eqnCoupMdef}
 \CoupM(\edges_\lambda) = \lbrace \left( r_{\edge\fedge}\right)_{\edge, \fedge\in\edges_\lambda} |\, 0 < r_{\edge\fedge} = r_{\edge\bedge} r_{\bedge\fedge}, \text{ for all } \edge, \fedge, \bedge\in\edges_\lambda \rbrace. 
\end{align}
Every matrix $r$ from this class clearly satisfies $r_{\edge\edge}=1$ as well as $r_{\edge\fedge} = r_{\fedge\edge}^{-1}$ for all edges $\edge$, $\fedge\in\edges_\lambda$. 
For future purposes, we furthermore state the useful identity
\begin{align}\label{eqnRid}
 \sum_{\edge\in\edges_\lambda} \left( \,\sum_{\fedge\in\edges_\lambda} r_{\fedge\edge} \right)^{-1} = \sum_{\edge\in\edges_\lambda} \left( r_{\bedge\edge} \sum_{\fedge\in\edges_\lambda} r_{\fedge\bedge} \right)^{-1} = \sum_{\edge\in\edges_\lambda} r_{\edge\bedge} \left( \,\sum_{\fedge\in\edges_\lambda} r_{\fedge\bedge} \right)^{-1} = 1. 
\end{align}
Also note that a matrix from the class $\CoupM(\edges_\lambda)$ is determined by $\kappa_\lambda -1$ of its entries. 
 For example, one may take the entries $r_{\edge\fedge}$ for $\fedge\in\edges_\lambda\backslash\lbrace \edge\rbrace$, where $\edge\in\edges_\lambda$ is some fixed edge.
In particular, if the set $\edges_\lambda$ only consists of two edges, then the matrix $\Gamma_\lambda$ is determined by a single positive real number, which resembles the coupling constant used for the inverse three-spectra problem \cite[Corollary~5.2]{ThreeSpectra}. 
Because of this, the matrix $\Gamma_\lambda$ is referred to as the {\em coupling matrix} corresponding to the eigenvalue $\lambda$. 

The need for additional spectral quantities stems from the fact that we are not able to recover the individual summands in~\eqref{eqnGinvExp} as soon as some of them have common poles. 
In this case, their residues will merge but can not be recovered from the (known) function on the left-hand side of~\eqref{eqnGinvExp}. 
However, the coupling matrices provide us with information about the ratios of the contribution of every summand to the residue. 
More precisely, if $\lambda\in\sigma(S)$ is an eigenvalue such that the set $\edges_\lambda$ is not empty, then a calculation using~\eqref{eqnStar} shows that one has 
\begin{align}\label{eqnGresCM}
  \lim_{z\rightarrow\lambda} - \frac{z-\lambda}{G(z)} = \lim_{z\rightarrow \lambda} (z-\lambda) \frac{\phi_\edge'(\lambda,0)}{\phi_\edge(\lambda,0)} \, \sum_{\fedge\in\edges_\lambda} \Gamma_{\lambda,\fedge\edge}
\end{align}
for every edge $\edge\in\edges_\lambda$. 
This means that under additional knowledge of the coupling matrices, we are able to recover the residues of the individual summands on the right-hand side of~\eqref{eqnGinvExp} from the (known) function $G$.

\section{An inverse spectral problem}

We are now ready to state the main result of this article; the solution of an inverse spectral problem for a star graph of Krein strings. 
As our principal spectral data, we take the spectrum $\sigma(S)$ associated with the whole graph as well as all the spectra $\sigma(S_\edge)$ associated with the individual edges $\edge\in\edges$. 
Roughly speaking, the main necessary and sufficient conditions for solvability of the corresponding inverse problem within the class $\M(\G)$ is the trace class property in Proposition~\ref{propTrace} together with the Herglotz--Nevanlinna property of the meromorphic function in Proposition~\ref{propGHN}. 
However, if the spectra are not pairwise disjoint, then we also need to prescribe the corresponding coupling matrices $\Gamma_\lambda$ as introduced in~\eqref{eqnCoupMatrixdef}, in order to guarantee uniqueness. 
It turns out that indeed all matrices from the class $\CoupM(\edges_\lambda)$, defined in~\eqref{eqnCoupMdef}, appear as coupling matrices. 

\begin{theorem}\label{thmIP}
 Let $\sigma$ and $\sigma_\edge$ for every edge $\edge\in\edges$ be discrete sets of positive reals and let $\Pi_\lambda\in\CoupM(\edges_\lambda)$ for every $\lambda\in\sigma$  with $\edges_\lambda\not=\emptyset$, where  $\edges_\lambda = \lbrace \edge\in\edges \,|\, \lambda\in\sigma_\edge\rbrace$. 
 Set   
 \begin{align}
  \kappa_\lambda = \begin{cases} 1, & \lambda \text{ belongs to none of the sets }\sigma_\edge,~\edge\in\edges, \\ k-1, & \lambda \text{ belongs to }\sigma_\edge\text{ for precisely }k\text{ edges }\edge\in\edges,\end{cases}
 \end{align}
 and suppose that $\kappa_\lambda\geq 1$ for every $\lambda\in\sigma$, that the sums
 \begin{align}
  \sum_{\lambda\in\sigma} & \frac{1}{\lambda}, &  \sum_{\mu\in\sigma_\edge} & \frac{1}{\mu}, \quad \edge\in\edges, 
 \end{align}
 are finite and that the meromorphic function 
 \begin{align}\label{eqnProdHN}
  \prod_{\lambda\in\sigma} \biggl( 1-\frac{z}{\lambda}\biggr)^{-\kappa_\lambda} \prod_{\edge\in\edges} \prod_{\mu\in\sigma_\edge} \biggl( 1-\frac{z}{\mu}\biggr), \quad z\in\C\backslash\sigma, 
 \end{align}
 is a Herglotz--Nevanlinna function. Then there is a unique measure $\omega\in\M(\G)$ on the graph $\G$ such that the spectrum associated with $\omega$ on the whole graph is $\sigma$, the spectra associated with $\omega$ on the individual edges $\edge\in\edges$ are $\sigma_\edge$ and the coupling matrices associated with $\omega$ are $\Pi_\lambda$ for every eigenvalue $\lambda\in\sigma$ with $\edges_\lambda\not=\emptyset$. 
\end{theorem}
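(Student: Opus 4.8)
The plan is to reduce the inverse problem on the graph to the classical inverse spectral problem for the individual Krein strings sitting on the edges. For each edge introduce the abbreviation $m_\edge(z) = \phi_\edge'(z,0)/\phi_\edge(z,0)$; by~\eqref{eqnPhiHN} this is a Herglotz--Nevanlinna function with simple poles exactly at the points of $\sigma_\edge$ and strictly negative residues there, and it satisfies the normalization $m_\edge(0) = -1/l_\edge$ (coming from the harmonic solution $\phi_\edge(0,x) = 1-x/l_\edge$) together with the absence of a linear term at infinity (since $\omega_\edge$ carries no mass at the endpoint $0$). The relevant bookkeeping is the identity~\eqref{eqnGinvExp},
\[
  -\frac{1}{G(z)} = \omega(\lbrace c\rbrace)\, z + \sum_{\edge\in\edges} m_\edge(z),
\]
whose left-hand side is reconstructible from the prescribed data: the summability hypothesis and Hadamard factorization make $\phi_\edge(z,0) = \prod_{\mu\in\sigma_\edge}(1-z/\mu)$ and $W(z) = \prod_{\lambda\in\sigma}(1-z/\lambda)^{\kappa_\lambda}$ entire of exponential type zero, and $G = L\prod_\edge\phi_\edge(\,\cdot\,,0)/W$ is, up to the positive constant $L$, precisely the function~\eqref{eqnProdHN}; by hypothesis it is Herglotz--Nevanlinna, whence so is $-1/G$.

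For the uniqueness part, suppose $\omega\in\M(\G)$ realizes the data. Then $G$, and hence $-1/G$, is determined by the data alone, and it remains to recover the individual summands $m_\edge$. At a pole $\mu$ lying in exactly one edge spectrum the residue of $-1/G$ equals that of $m_\edge$ and is read off directly; at a shared eigenvalue $\lambda\in\sigma$ with $\edges_\lambda\neq\emptyset$ the contributing residues merge, but~\eqref{eqnGresCM} together with the prescribed coupling matrix $\Pi_\lambda = \Gamma_\lambda$ furnishes the ratios of the individual residues and thus, after distributing the total residue accordingly, each one separately. Since each $m_\edge$ has no linear term and the value $m_\edge(0) = -1/l_\edge$ is fixed by the (known) edge length, its Herglotz--Nevanlinna representation is now completely determined, and the classical uniqueness theorem for a single Krein string (\cite{ka94}, \cite{ThreeSpectra}) recovers $\omega_\edge$; the point mass $\omega(\lbrace c\rbrace)$ is the linear coefficient of $-1/G$ at infinity. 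This pins down $\omega$.

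For existence, I would run the construction in reverse. From the data build $\phi_\edge(\,\cdot\,,0)$, $W$ and $G$ as above; the Herglotz--Nevanlinna hypothesis on~\eqref{eqnProdHN} guarantees that $G$ has simple, interlacing zeros and poles with the correct residue signs, so that $-1/G$ is a genuine Herglotz--Nevanlinna function. Decompose $-1/G$ into edge contributions $m_\edge$ by splitting, at each shared eigenvalue $\lambda$, the residue of $-1/G$ in the proportions dictated by $\Pi_\lambda$ via~\eqref{eqnGresCM}, and normalizing by $m_\edge(0) = -1/l_\edge$ with vanishing linear term. That these partial residues sum back to the residue of $-1/G$---so that the decomposition is consistent---is exactly the coupling matrix identity~\eqref{eqnRid}. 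The inverse theorem for a single Krein string then yields measures $\omega_\edge$ on the edges realizing the $m_\edge$, with the growth bound~\eqref{eqnGrowth} following from finiteness of the sums $\sum 1/\mu$. Taking $\omega$ to be the measure with point mass $\omega(\lbrace c\rbrace)$ at the central vertex (the linear coefficient of $-1/G$) and restriction $\omega_\edge$ on each $I_\edge$ produces a candidate in $\M(\G)$, and one closes the loop by re-deriving $W$, $G$ and the coupling matrices from $\omega$ through Proposition~\ref{propMult} and~\eqref{eqnGresCM}, checking that they return $\sigma$, the $\sigma_\edge$ and the $\Pi_\lambda$.

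I expect the main obstacle to be the existence step of certifying that the formally reconstructed $m_\edge$ are honest Herglotz--Nevanlinna functions with pole set exactly $\sigma_\edge$ and the right normalization, rather than mere partial-fraction series. All of the residue signs, the pole--zero interlacing, and the correct behavior at the origin must be extracted from the single global assumption that~\eqref{eqnProdHN} is Herglotz--Nevanlinna, with the coupling matrix identity~\eqref{eqnRid} bridging the separation at shared eigenvalues; verifying that the assembled $\omega$ neither loses nor creates eigenvalues of $S$ and returns precisely the prescribed coupling matrices is the remaining delicate point.
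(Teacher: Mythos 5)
Your proposal is correct and follows essentially the same route as the paper: both reduce the problem to single-string inverse problems on the edges by reading poles and residues off the Herglotz--Nevanlinna function $-1/G$, splitting the residues at shared eigenvalues in the proportions dictated by the coupling matrices via \eqref{eqnGresCM} and the consistency identity \eqref{eqnRid}, and then invoking the existence/uniqueness theorem for a single Krein string from \cite{ThreeSpectra} edge by edge; your prescription of the residues of $m_\edge$ is equivalent, through \eqref{eqnDotPhi}, to the paper's prescription of the norming constants in \eqref{eqnNormPhiDef}. The one point to repair is your claim that the bound \eqref{eqnGrowth} follows from $\sum_\mu 1/\mu<\infty$: that sum only gives the trace class condition, while finiteness of $\omega_\edge$ near the central vertex requires summability of the weighted residues $\sum_\mu 1/(\mu^2\eta_\mu)$, which the paper deduces from the Herglotz--Nevanlinna hypothesis on \eqref{eqnProdHN} together with \cite[Corollary~4.2]{ThreeSpectra}.
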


\begin{proof}
{\em Existence.} 
For notational simplicity, we introduce the entire functions
\begin{align*}
 V(z) & = \prod_{\lambda\in\sigma} \biggl( 1-\frac{z}{\lambda}\biggr)^{\kappa_\lambda}, & P_\edge(z) & = \prod_{\mu\in\sigma_\edge} \biggl( 1-\frac{z}{\mu}\biggr), \quad z\in\C, 
\end{align*}
for every edge $\edge\in\edges$, as well as the (strictly negative by assumption) residues 
\begin{align*}
  - \frac{1}{\eta_\mu} & = \lim_{z\rightarrow\mu} -(z-\mu) \frac{V(z)}{L} \prod_{\edge\in\edges} P_\edge(z)^{-1} <0, \quad \mu\in\bigcup_{\edge\in\edges} \sigma_\edge.\end{align*}
Now let $\omega\in\M(\G)$ and denote all quantities associated with this measure as in the preceding sections. 
We are going to show that one may redefine $\omega_\edge$ for every edge $\edge\in\edges$ as well as $\omega(\lbrace c\rbrace)$ such that $\omega$ is a solution of our inverse problem. 
Therefore, first note that for every edge $\edge\in\edges$, we may choose $\omega_\edge$ such that  
\begin{align*}
 \phi_\edge(z,0) = P_\edge(z), \quad z\in\C, 
\end{align*}
 and such that the norms of the eigenfunctions are given by 
\begin{align}\label{eqnNormPhiDef}
 \|\phi_\edge(\mu,\cdot\,)\|_{H_0^1(I_\edge)}^{-2} = \frac{\eta_\mu}{\mu} \dot{P}_\edge(\mu)^{-2} \begin{cases} 1, & \mu\in\sigma_\edge\backslash\sigma, \\
                                                        \sum_{\fedge\in\edges_\mu} \Pi_{\mu,\fedge\edge}, &  \mu\in\sigma_\edge\cap\sigma.
                                                    \end{cases}
\end{align}                                    
In fact, the existence of a Borel measure $\omega_\edge$ on $I_\edge$ with all these properties is guaranteed by \cite[Theorem~4.1]{ThreeSpectra}. 
Moreover, the necessary fact that $\omega_\edge$ is finite near zero follows from \cite[Corollary~4.2]{ThreeSpectra}, since the sum 
\begin{align*}
 \sum_{\mu\in\sigma_\edge} \frac{\|\phi_\edge(\mu,\cdot\,)\|_{H_0^1(I_\edge)}^{2}}{\mu^3 \dot{P}_\edge(\mu)^{2}} \leq \sum_{\mu\in\sigma_\edge} \frac{1}{\mu^2\, \eta_\mu} 
\end{align*}
is finite due to the fact that~\eqref{eqnProdHN} is a Herglotz--Nevanlinna function. 
Furthermore, we may adjust the mass of $\omega$ in the central vertex, where we set
\begin{align*}
 \omega(\lbrace c \rbrace) = - \lim_{y\rightarrow\infty} \frac{1}{\I y} \frac{V(\I y)}{L} \prod_{\edge\in\edges} P_\edge(\I y)^{-1} \geq 0,
\end{align*} 
which is known to be non-negative since~\eqref{eqnProdHN} is a Herglotz--Nevanlinna function.  
With these choices, we have guaranteed that 
\begin{align}\label{eqnGinv}
 -G(z)^{-1} = -  \frac{V(z)}{L} \prod_{\edge\in\edges} P_\edge(z)^{-1}, \quad z\in\C\backslash\R. 
\end{align}
In fact, both of these functions have the same poles and residues by our definitions, bearing in mind~\eqref{eqnStar} and~\eqref{eqnRid}. 
Moreover, they also have the same growth along the imaginary axis by~\eqref{eqnGinvExp} and \cite[Corollary~10.8]{MeasureSL}. 
But since both of them are Herglotz--Nevanlinna functions (and coincide at zero), this already ensures~\eqref{eqnGinv}. 

 In conclusion, we have defined a weight measure $\omega\in\M(\G)$ such that the spectra associated with the individual edges are precisely the desired ones, that is, we have $\sigma(S_\edge) = \sigma_\edge$ for every edge $\edge\in\edges$. 
 As a consequence of~\eqref{eqnGinv}, this already guarantees that the spectrum associated with $\omega$ on the whole graph is $\sigma$, that is, $\sigma(S) = \sigma$. 
 In particular, the quantities $\kappa_\lambda$ and sets $\edges_\lambda$ for $\lambda\in\sigma$ coincide precisely with the ones defined in~\eqref{eqnkappalambda} and~\eqref{eqnedgeslambda}. 
 Finally, if $\lambda\in\sigma(S)$ is an eigenvalue such that the set $\edges_\lambda$ is not empty, then a calculation using~\eqref{eqnNormPhiDef} yields 
 \begin{align*}
  \frac{\|\phi_\edge(\lambda,\cdot\,)\|_{H_0^1(I_\edge)}^2}{\|\phi_\fedge(\lambda,\cdot\,)\|_{H_0^1(I_\fedge)}^2} \frac{\dot{\phi}_\fedge(\lambda,0)^2}{\dot{\phi}_\edge(\lambda,0)^2} = \frac{\sum_{\bedge\in\edges_{\lambda}} \Pi_{\lambda,\bedge\fedge}}{\sum_{\bedge\in\edges_\lambda} \Pi_{\lambda,\bedge\edge}} =   \frac{\sum_{\bedge\in\edges_{\lambda}} \Pi_{\lambda,\bedge\edge} \Pi_{\lambda,\edge\fedge}}{ \sum_{\bedge\in\edges_\lambda} \Pi_{\lambda,\bedge\edge}} = \Pi_{\lambda,\edge\fedge} 
 \end{align*}
 for all edges $\edge$, $\fedge\in\edges_\lambda$, which proves that $\Gamma_\lambda = \Pi_\lambda$. 

{\em Uniqueness.} 
Now suppose that $\tilde{\omega}\in\M(\G)$ is another solution of the inverse spectral problem and denote all corresponding quantities as the ones for $\omega$ but with an additional twiddle. 
Then from~\eqref{eqnW} we have 
\begin{align}\label{eqnMUniq} 
 \omega(\lbrace c\rbrace) z + \sum_{\edge\in\edges} \frac{\phi_\edge'(z,0)}{\phi_\edge(z,0)} = \tilde{\omega}(\lbrace c\rbrace) z + \sum_{\edge\in\edges} \frac{\tilde{\phi}_\edge'(z,0)}{\tilde{\phi}_\edge(z,0)}, \quad z\in\C\backslash\R,
\end{align}
which already shows $\tilde{\omega}(\lbrace c \rbrace) = \omega(\lbrace c\rbrace)$ in view of \cite[Corollary~10.8]{MeasureSL}.
 In order to show that we even have   
 \begin{align}\label{eqnMedgeuniq}
   \frac{\phi_\edge'(z,0)}{\phi_\edge(z,0)} = \frac{\tilde{\phi}_\edge'(z,0)}{\tilde{\phi}_\edge(z,0)}, \quad z\in\C\backslash\R, 
 \end{align}
 for every edge $\edge\in\edges$, it suffices to verify that their residues are the same (since they coincide at zero). 
 However, if $\mu\in\sigma(S_\edge)$ is no eigenvalue of $S$, then this is immediate from~\eqref{eqnMUniq} since then only one of the summands has a pole at $\mu$. 
 Otherwise, that is, when $\mu$ is an eigenvalue of $S$, then we may readily deduce this fact from~\eqref{eqnGresCM}. 
 Thus we conclude that~\eqref{eqnMedgeuniq} holds for every edge $\edge\in\edges$, which guarantees that $\tilde{\omega}_\edge =\omega_\edge$ on $I_\edge$ in view of, for example, \cite[Section~6.6]{dymc76}, \cite[Theorem~4.1]{ThreeSpectra}.
\end{proof}

As noted in the remark after Proposition~\ref{propGHN}, the condition on the function in~\eqref{eqnProdHN} being a Herglotz--Nevanlinna function only involves a particular kind of interlacing property for the given spectra; cf.\ \cite[Theorem~2.1]{gesi99}, \cite[Theorem~27.2.1]{le96}.  

Let us point out explicitly that Theorem~\ref{thmIP} comprises the fact that the weight measure $\omega\in\M(\G)$ is uniquely determined by the spectrum associated with the whole graph and all the spectra associated with the edges if and only if they are pairwise disjoint; cf.\ \cite[Theorem~5.1]{ThreeSpectra}. 
In fact, in this case all of the sets $\edges_\lambda$ are empty and hence no coupling matrices are present. 
However, certain parts of the weight measure may still be uniquely determined. 
For example, the weight $\omega(\lbrace c\rbrace)$ in the central vertex is always uniquely determined by the spectra in view of~\eqref{eqnGinvExp}. 
Also the measure $\omega_\edge$ on some edge $\edge\in\edges$ is already uniquely determined under the weaker condition that $\sigma(S)$ and $\sigma(S_\edge)$ are disjoint. 

It is also possible to relate the growth of the weight measure $\omega\in\M(\G)$ near some outer vertex to the growth of the associated spectral quantities. 
More precisely, we are able to give a precise criteria for the weight measure to be finite near some outer vertex in terms of our spectral data. 
To this end, let $\omega\in\M(\G)$ be some weight measure, denote all corresponding quantities as usual and set 
\begin{align}
 -\frac{1}{\eta_\mu} = - \lim_{z\rightarrow \mu} \frac{z-\mu}{G(z)}, \quad \mu\in\bigcup_{\edge\in\edges} \sigma(S_\edge). 
\end{align} 

\begin{corollary}\label{corRegCrit}
 For every edge $\edge\in\edges$, the  weight measure $\omega_\edge$ is finite near $l_\edge$ if and only if the sum  
 \begin{align}\label{eqnDecRho}
   \sum_{\mu\in\sigma(S_\edge)} & \frac{\eta_\mu}{\mu^2\, \dot{\phi}_\edge(\mu,0)^2} \begin{cases} 1, & \mu\not\in\sigma(S), \\ \sum_{\fedge\in\edges_\mu} \Gamma_{\mu,\fedge\edge}, & \mu\in\sigma(S), \end{cases}
\end{align}
 is finite. 
\end{corollary}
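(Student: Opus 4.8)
The plan is to recognise that the corollary, despite its global-looking formulation, is really a statement about the single Krein string $(I_\edge,\omega_\edge)$ alone: I would first rewrite the sum \eqref{eqnDecRho}, whose summands involve the global residues $\eta_\mu$ and the coupling matrices $\Gamma_\mu$, as an intrinsic sum of reciprocal norming constants of that string, and only then appeal to the known regularity criterion for a single string. The conceptual point is that the coupling-matrix weight $\sum_{\fedge\in\edges_\mu}\Gamma_{\mu,\fedge\edge}$ is precisely the factor that undoes the merging of residues at a common eigenvalue and recovers the contribution of the edge $\edge$.

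To carry this out, fix $\mu\in\sigma(S_\edge)$ and distinguish two cases. If $\mu\notin\sigma(S)$, then $\edges_\mu=\lbrace\edge\rbrace$ and $\mu$ is a simple pole of $-G^{-1}$ whose residue stems, by \eqref{eqnGinvExp}, solely from the summand $\phi_\edge'(z,0)/\phi_\edge(z,0)$, so that $-\eta_\mu^{-1}=\phi_\edge'(\mu,0)/\dot\phi_\edge(\mu,0)$. If instead $\mu\in\sigma(S)$, then \eqref{eqnGresCM} gives $-\eta_\mu^{-1}=\bigl(\phi_\edge'(\mu,0)/\dot\phi_\edge(\mu,0)\bigr)\sum_{\fedge\in\edges_\mu}\Gamma_{\mu,\fedge\edge}$. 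Writing $c_\mu=1$ in the first case and $c_\mu=\sum_{\fedge\in\edges_\mu}\Gamma_{\mu,\fedge\edge}$ in the second, both cases combine, upon inserting \eqref{eqnDotPhi} in the form $\dot\phi_\edge(\mu,0)\phi_\edge'(\mu,0)=-\int_{I_\edge}|\phi_\edge(\mu,x)|^2 d\omega_\edge(x)$, into the single identity
\begin{align*}
 \frac{\eta_\mu}{\mu^2\,\dot\phi_\edge(\mu,0)^2}\, c_\mu = \frac{1}{\mu^2\int_{I_\edge}|\phi_\edge(\mu,x)|^2\, d\omega_\edge(x)}.
\end{align*}
Consequently the sum \eqref{eqnDecRho} equals $\sum_{\mu\in\sigma(S_\edge)}\bigl(\mu^2\int_{I_\edge}|\phi_\edge(\mu,x)|^2 d\omega_\edge(x)\bigr)^{-1}$, an expression depending only on the eigenvalues and norming constants of the string on the edge $\edge$.

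It then remains to invoke the single-string regularity criterion: the measure $\omega_\edge$ is finite near the outer endpoint $l_\edge$, at which the solutions $\phi_\edge$ are normalised, if and only if the above sum of reciprocal norming constants converges. This is \cite[Corollary~4.2]{ThreeSpectra} applied to the string $(I_\edge,\omega_\edge)$ at the endpoint $l_\edge$, the classical form of the statement going back to Krein's theory as presented in \cite[Section~6.6]{dymc76}; note that the existence part of Theorem~\ref{thmIP} already relied on the companion criterion at the central endpoint.

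I expect the main obstacle to be the bookkeeping in the reduction, namely keeping the signs straight and, above all, verifying that the coupling factor $c_\mu$ merges the two cases into one, which is exactly where the coupling matrices enter and where one must be sure that \eqref{eqnGresCM} extracts the edge-$\edge$ part of the merged residue correctly. A secondary subtlety is to cite the single-string criterion at the correct endpoint, since $\phi_\edge$ is normalised at $l_\edge$ whereas the inverse construction imported from \cite{ThreeSpectra} naturally controls the opposite end; checking that the relevant version of \cite[Corollary~4.2]{ThreeSpectra} applies at $l_\edge$ is where care is needed.
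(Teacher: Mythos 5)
Correct, and essentially the paper's own proof: the case-by-case identity you derive from \eqref{eqnGinvExp}, \eqref{eqnGresCM} and \eqref{eqnDotPhi} is exactly the paper's \eqref{eqnNormPhiDef} rewritten via $\|\phi_\edge(\mu,\cdot\,)\|_{H_0^1(I_\edge)}^2=\mu\int_{I_\edge}|\phi_\edge(\mu,x)|^2\,d\omega_\edge(x)$, after which both you and the paper conclude by the single-string criterion of \cite[Corollary~4.2]{ThreeSpectra} applied at the endpoint $l_\edge$. Your explicit residue computation is a harmless (indeed clarifying) expansion, since it makes visible that \eqref{eqnNormPhiDef} holds for an arbitrary $\omega\in\M(\G)$ and not merely for the measure constructed in the existence part of Theorem~\ref{thmIP}.
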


\begin{proof}
 This is immediate from \cite[Corollary~4.2]{ThreeSpectra} and~\eqref{eqnNormPhiDef}. 
\end{proof}

Hereby note that the residues of the function $G^{-1}$ are uniquely determined by the spectra.
Therefore, the criterium in Corollary~\ref{corRegCrit} is indeed formulated in terms of the given spectral quantities for our inverse spectral problem.

\section{Approximation with Stieltjes strings}\label{s6}

As a final result, we are going to show that the solution of our inverse problem can be approximated by Stieltjes strings \cite{pirotr13}, which are obtained by cutting off the given spectral data. 
To this end, we will first equip the space of weight measures $\M(\G)$ with the initial topology with respect to the linear functionals 
\begin{align}
 \omega \mapsto \int_\G f(x) \Trf(x)d\omega(x), \quad f\in C_0(\G),
\end{align}
on $\M(\G)$. 
Note that this is the weak$^\ast$ topology upon identifying $\M(\G)$ with a (weak$^\ast$ closed) subset of the dual of $C_0(\G)$. 
In fact, each measure $\omega\in\M(\G)$ can be regarded as the bounded linear functional
\begin{align}
 f \mapsto \int_\G f(x) \Trf(x) d\omega(x)
\end{align}
on $C_0(\G)$. 
As a consequence, the topology is metrizable on bounded (with respect to the operator norm of the corresponding functionals) subsets of $\M(\G)$ and convergent sequences in $\M(\G)$ are bounded. 
 In order to state the following results, consider some sequence of measures $\omega_n\in\M(\G)$, $n\in\N$ on the star graph $\G$. 
 We write $\omega_n\rightharpoonup^\ast\omega$ if this sequence converges to $\omega$ with respect to the weak$^\ast$ topology. 
 All quantities corresponding to the measures $\omega$ and $\omega_n$, $n\in\N$ are denoted as in the preceding sections but with an additional subscript $n\in\N$ for those of $\omega_n$.  

\begin{lemma}\label{lemConvGreen}
  If $\omega_n\rightharpoonup^\ast\omega$, then $G_n(z)\rightarrow G(z)$ locally uniformly for all $z\in\C\backslash\R$. 
\end{lemma}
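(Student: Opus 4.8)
The plan is to prove convergence of the Herglotz--Nevanlinna functions $G_n$ to $G$ by passing through their reciprocals, since $-G^{-1}$ admits the explicit representation~\eqref{eqnGinvExp} in terms of the edge quantities $\phi_\edge'(z,0)/\phi_\edge(z,0)$ and the point mass $\omega(\{c\})$, which are each Herglotz--Nevanlinna functions. First I would establish that $\omega_n\rightharpoonup^\ast\omega$ forces convergence of the relevant spectral quantities on each individual edge. The key observation is that the functional $\omega\mapsto\int_\G f\,\Trf\,d\omega$ controls the integrals $\int_{I_\edge}|\phi_\edge(z,x)|^2\,d\omega_\edge(x)$ that appear as the imaginary parts in~\eqref{eqnPhiHN}, because $\Trf$ dominates $\repc$ near the outer vertices and the eigensolutions $\phi_\edge(z,\cdot\,)$ are bounded by a multiple of $\repc$ by the growth condition~\eqref{eqnGrowth}. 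Since convergent sequences in $\M(\G)$ are bounded, I can extract the needed uniform bounds and, via the integration-by-parts formula~\eqref{eqnIntParts}, show that $\phi_{\edge,n}(z,x)\to\phi_\edge(z,x)$ and $\phi_{\edge,n}'(z,x)\to\phi_\edge'(z,x)$ pointwise, with the convergence locally uniform in $z$.

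Next I would assemble these edgewise limits. For a fixed $z\in\C\backslash\R$, the quantity $\phi_{\edge,n}'(z,0)/\phi_{\edge,n}(z,0)$ converges to $\phi_\edge'(z,0)/\phi_\edge(z,0)$ provided the denominators stay bounded away from zero; this is guaranteed off the real axis because each edge quantity is Herglotz--Nevanlinna and hence $\phi_\edge(z,0)\neq 0$ for $z\in\C\backslash\R$, with a lower bound on $|\phi_{\edge,n}(z,0)|$ following from the uniform imaginary-part estimate supplied by~\eqref{eqnPhiHN}. Combined with the convergence $\omega_n(\{c\})\to\omega(\{c\})$ of the central masses (again extracted from the weak$^\ast$ convergence applied to a suitable test function concentrated near $c$), the representation~\eqref{eqnGinvExp} yields
\begin{align*}
 -G_n(z)^{-1} = \omega_n(\{c\})z + \sum_{\edge\in\edges} \frac{\phi_{\edge,n}'(z,0)}{\phi_{\edge,n}(z,0)} \longrightarrow -G(z)^{-1}
\end{align*}
locally uniformly on $\C\backslash\R$, the finite sum causing no difficulty.

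Finally I would invert. Since $G$ and each $G_n$ are Herglotz--Nevanlinna, their imaginary parts have a definite sign on the upper and lower half-planes, so $G_n(z)^{-1}$ and $G(z)^{-1}$ stay bounded away from zero on compact subsets of $\C\backslash\R$; passing to reciprocals then turns the locally uniform convergence of $-G_n^{-1}$ into locally uniform convergence of $G_n$, as claimed.

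The main obstacle I expect is the first step: controlling the edge solutions uniformly in $n$ near the outer vertices. The difficulty is that a priori different measures $\omega_n$ could concentrate mass differently close to $l_\edge$, so the pointwise convergence $\phi_{\edge,n}\to\phi_\edge$ is not automatic from weak$^\ast$ convergence alone. The resolution is that the boundary condition in Theorem~\ref{thmPhiE} is imposed at the outer vertex, where the choice of $\Trf$ (rather than a test function bounded away from zero) in the definition of the topology is precisely what provides the integrable majorant; combining this majorant with dominated convergence in the integral equation~\eqref{eqnDEedge} upgrades the weak$^\ast$ convergence to the required locally uniform convergence of the solutions and their derivatives.
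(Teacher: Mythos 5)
There is a genuine gap, and it sits exactly where you did not expect it: not at the outer vertices but at the central one. Your plan recovers $G$ from the decomposition \eqref{eqnGinvExp} by proving that each summand converges separately, i.e.\ that $\omega_n(\lbrace c\rbrace)\rightarrow\omega(\lbrace c\rbrace)$ and that $\phi_{n,\edge}'(z,0)/\phi_{n,\edge}(z,0)\rightarrow\phi_\edge'(z,0)/\phi_\edge(z,0)$ for every edge. Both claims are false in general. Take two edges and $\omega_n=\delta_{x_n}$, a unit point mass on the first edge at distance $x_n\rightarrow0$ from the central vertex. Then $\omega_n\rightharpoonup^\ast\delta_c$ (the test functions $f\Trf$, $f\in C_0(\G)$, are continuous at $c$), yet $\omega_n(\lbrace c\rbrace)=0$ for all $n$ while the limit has mass one at $c$; moreover a direct computation with \eqref{eqnDEedge} gives
\begin{align*}
 \frac{\phi_{n,1}'(z,0)}{\phi_{n,1}(z,0)}
 = \frac{-\frac{1}{l_1}+z\bigl(1-\frac{x_n}{l_1}\bigr)}{1-z\,x_n\bigl(1-\frac{x_n}{l_1}\bigr)}
 \longrightarrow z-\frac{1}{l_1}
 \not= -\frac{1}{l_1}=\frac{\phi_1'(z,0)}{\phi_1(z,0)},
\end{align*}
the right-hand side being the edge term of the limit measure, whose restriction to $I_1$ is zero. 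The mass migrates from the edge into the vertex: the error in the vertex term and the error in the edge term cancel in the sum \eqref{eqnGinvExp} (both sides tend to $z-1/L$), which is why the lemma is nevertheless true, but an argument that treats the summands one at a time cannot see this cancellation. The underlying reason is that weak$^\ast$ convergence on $\G$ only yields convergence of the restrictions $\omega_{n,\edge}$ against test functions compactly supported in the open intervals $I_\edge$, and no continuous test function isolates the point mass at $c$; hence neither $\omega\mapsto\omega(\lbrace c\rbrace)$ nor $\omega\mapsto\phi_\edge'(z,0)/\phi_\edge(z,0)$ is weak$^\ast$ continuous. Your proposed dominated-convergence argument for $\phi_{n,\edge}\rightarrow\phi_\edge$ therefore cannot work as stated: the obstruction is not a missing majorant near $l_\edge$ but loss of mass at the endpoint $0$ of the edge.

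This is precisely why the paper never decomposes along edges. Its proof stays on the whole graph: by Proposition \ref{propWeakForm} one has $\spr{S_n^{-1}g}{h}_{H_0^1(\G)}=\int_\G g h^\ast d\omega_n$, so weak$^\ast$ convergence of the measures gives $S_n^{-1}\rightarrow S^{-1}$ in the weak operator topology (using uniform boundedness from Proposition \ref{propTrace}); an Arzel\`{a}--Ascoli argument upgrades this to the strong operator topology, hence to strong resolvent convergence, and then the identity
\begin{align*}
 \spr{(S-z)^{-1}\repc}{\repc}_{H_0^1(\G)}=\frac{G(z)-L}{z}, \quad z\in\C\backslash\R,
\end{align*}
together with local uniform boundedness of the functions $G_n$ yields the claim. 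The quadratic form $\int_\G gh^\ast d\omega$ treats the vertex mass and the masses on the edges as a single object, so the cancellation that defeats your decomposition is built in automatically. Any repair of your approach would have to replace the individual edge Weyl functions and the vertex mass by weak$^\ast$ continuous quantities attached to the whole graph, which is in effect what the paper's operator-theoretic argument does.
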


\begin{proof}
First of all, we will show that the operators $S_n^{-1}$ converge to $S^{-1}$ in the weak operator topology. 
Indeed, Proposition~\ref{propWeakForm} shows that for $g\in H_0^1(\G)$ one has 
\begin{align*}
 \spr{S_n^{-1} g}{h}_{H_0^1(\G)} & = \int_\G g(x) h(x)^\ast d\omega_n(x) \rightarrow \int_\G g(x) h(x)^\ast d\omega(x) = \spr{S^{-1}g}{h}_{H_0^1(\G)}, 
\end{align*}
 at least for those functions $h\in H_0^1(\G)$ with compact support. 
However, functions with compact support are dense and our operators are uniformly bounded in view of Proposition~\ref{propTrace}. 
 Thus, we infer that this convergence even holds for all $h\in H_0^1(\G)$.  

In order to prove that the operators $S_n^{-1}$ converge even in the strong operator topology, fix some $g\in H_0^1(\G)$. 
Now one observes that the functions $S_n^{-1}g$ converge pointwise to $S^{-1}g$ as well as that $S_n^{-1} g$ is uniformly bounded in $H_0^1(\G)$. 
Thus we infer from the Arzel\`{a}--Ascoli theorem that the functions $S_n^{-1} g$ converge locally uniformly to $S^{-1} g$.  
If $g$ has compact support, then (use Proposition~\ref{propWeakForm} once more)
\begin{align*}
 \left| \|S_n^{-1} g\|_{H_0^1(\G)}^2 - \|S^{-1} g\|_{H_0^1(\G)}^2 \right|  = \left| \int_\G g\, S_n^{-1}g^\ast\,  d\omega_n - \int_\G g\, S^{-1}g^\ast\, d\omega \right| 
\end{align*}
can be estimated  by 
\begin{align*}
   \omega_n(\supp(g)) \sup_{x\in\supp(g)} |g(x)| & \left| S_n^{-1}g(x) - S^{-1}g(x)\right| \\ & \quad + \left| \int_\G g\, S^{-1}g^\ast\, d\omega_n - \int_\G g\, S^{-1}g^\ast\, d\omega \right|,
\end{align*}
which shows that $S_n^{-1}g$ converges to $S^{-1}g$ in $H_0^1(\G)$. 
Again, since our operators are uniformly bounded, we infer that $S_n^{-1}\rightarrow S^{-1}$ in the strong operator topology. 

 As a consequence, the operators $(S_n-z)^{-1}$ converge to $(S-z)^{-1}$ in the strong operator topology (see, e.g., \cite[Lemma~6.36 and Theorem~6.31]{tschroe}) for every $z\in\C\backslash\R$.  
 In order to finish the proof, just observe that 
 \begin{align*}
  \spr{(S-z)^{-1}\repc}{\repc}_{H_0^1(\G)} = \frac{G(z)-L}{z}, \quad z\in\C\backslash\R, 
 \end{align*}
 and that the functions $G_n$, $n\in\N$ are locally uniformly bounded on $\C\backslash\R$. 
\end{proof}

 With the aid of this auxiliary result, we are now going to show that $\omega$ can be approximated by particular Stieltjes strings. 
 These approximating discrete weight measures are obtained by cutting off the spectral data corresponding to $\omega$.
 To be precise, we suppose that the weight measures $\omega_n$ for every $n\in\N$ are given in terms of their spectral data by 
 \begin{align}\label{eqnSpecStiel}
  \sigma(S_n) & = \sigma(S) \cap (0,n), & \sigma(S_{n,\edge}) & = \sigma(S_\edge) \cap (0,n), \quad \edge\in\edges,
 \end{align}
 and by $\Gamma_{n,\lambda} = \Gamma_\lambda$ for every $\lambda\in\sigma(S_n)$ for which the set $\edges_{n,\lambda} = \edges_\lambda$ is not empty. 
 It is readily verified that these spectral data satisfy the conditions of Theorem~\ref{thmIP} and thus, the measures $\omega_n$ are well-defined. 
 In fact, they are known to be supported on finite sets since their corresponding spectra are finite. 
 Moreover, the positions and weights of the individual point masses can be written down explicitly in terms of the spectral data. 
 All this has been done quite recently in \cite{bopi08a}, \cite[Section~2]{pirotr13}. 
 
 Now the solution of the general inverse problem can be obtained as a limit (in the weak$^\ast$ topology) of these particular Stieltjes strings, as the following result shows.  

\begin{theorem}\label{thmAppro}
 If the weight measures $\omega_n$, $n\in\N$ are given in terms of their spectral data by~\eqref{eqnSpecStiel} and by $\Gamma_{n,\lambda} = \Gamma_\lambda$ for every $\lambda\in\sigma(S_n)$ for which the set $\edges_{n,\lambda}$ is not empty, then we have $\omega_n\rightharpoonup^\ast\omega$ and moreover, 
 \begin{align}\label{eqnTrfConv}
   \int_\G \Trf(x) d\omega_n(x) \rightarrow \int_\G \Trf(x) d\omega(x).
 \end{align} 
\end{theorem}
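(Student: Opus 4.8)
The plan is to combine the weak$^\ast$ convergence furnished by spectral data with the trace formula of Proposition~\ref{propTrace} in order to upgrade the convergence and to control the mass near the outer vertices. First I would establish that the functions $G_n$ converge to $G$ on $\C\backslash\R$, which is \emph{not} immediate from Lemma~\ref{lemConvGreen} since that lemma presupposes $\omega_n\rightharpoonup^\ast\omega$; rather, here the $\omega_n$ are defined from the cut-off spectral data, so I would argue the other way around. By construction the poles and residues of $G_n^{-1}$ (given via~\eqref{eqnGinvExp}) are precisely those of $G^{-1}$ that lie in $(0,n)$, and the coupling data agree on this range. Since all the relevant sums $\sum 1/\lambda$, $\sum 1/\mu$ converge, the truncated products defining $V_n$, $P_{n,\edge}$ converge locally uniformly to $V$, $P_\edge$, and hence $G_n(z)\to G(z)$ locally uniformly on $\C\backslash\R$ by~\eqref{eqnGinv}.

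Next I would extract weak$^\ast$ convergence of the measures. The natural route is to show that $\{\omega_n\}$ is bounded in the relevant functional norm and that every weak$^\ast$ limit point must coincide with $\omega$. Boundedness follows from the trace formula~\eqref{eqnTraceForm}, since $\int_\G \Trf\, d\omega_n = \trace{S_n^{-1}}$, and the trace is controlled by the spectral data through $\sum_{\lambda\in\sigma(S_n)} \lambda^{-1}\leq\sum_{\lambda\in\sigma(S)}\lambda^{-1}<\infty$; by the remark following Lemma~\ref{lemConvGreen} the topology is metrizable on bounded sets and bounded sequences are weak$^\ast$ relatively compact. For any weak$^\ast$ convergent subsequence $\omega_{n_k}\rightharpoonup^\ast\nu$ with $\nu\in\M(\G)$, Lemma~\ref{lemConvGreen} gives $G_{n_k}\to G^\nu$ locally uniformly, while the previous paragraph gives $G_{n_k}\to G$; hence $G^\nu=G$. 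By the uniqueness portion of Theorem~\ref{thmIP} (more directly, by the unique recovery of $\omega$ from $G$ via~\eqref{eqnGinvExp} and the edge data) this forces $\nu=\omega$. Since every subsequence has a further subsequence converging to $\omega$, the whole sequence satisfies $\omega_n\rightharpoonup^\ast\omega$.

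Finally, to obtain the convergence of traces~\eqref{eqnTrfConv}, I would note that $\omega_n\rightharpoonup^\ast\omega$ gives $\int_\G f\,\Trf\,d\omega_n\to\int_\G f\,\Trf\,d\omega$ only for $f\in C_0(\G)$, whereas~\eqref{eqnTrfConv} is the statement for $f\equiv 1$, which is \emph{not} compactly supported. This is where the main obstacle lies: one must rule out escape of mass toward the outer vertices. I would rewrite both sides using~\eqref{eqnTraceForm} as traces of $S_n^{-1}$ and $S^{-1}$, so that~\eqref{eqnTrfConv} becomes $\trace{S_n^{-1}}\to\trace{S^{-1}}$. Since each $S_n^{-1}$ is non-negative trace class with eigenvalues $\{\lambda^{-1}:\lambda\in\sigma(S)\cap(0,n)\}$ and $S^{-1}$ has eigenvalues $\{\lambda^{-1}:\lambda\in\sigma(S)\}$, the trace is simply $\sum_{\lambda\in\sigma(S)\cap(0,n)}\lambda^{-1}$, which increases to $\sum_{\lambda\in\sigma(S)}\lambda^{-1}=\trace{S^{-1}}$ as $n\to\infty$ by monotone convergence; this directly yields~\eqref{eqnTrfConv}. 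The only subtlety is to confirm that the spectrum of $S_n$ is exactly $\sigma(S)\cap(0,n)$ with matching multiplicities, which holds because $W_n$ is built from the truncated data exactly as in Proposition~\ref{propMult}; I would invoke that proposition to identify eigenvalues and multiplicities on both sides and thereby conclude.
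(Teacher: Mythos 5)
Your overall architecture (compactness of $\lbrace \omega_n \rbrace$ via the trace bound, identification of subsequential limits, monotone convergence of the truncated eigenvalue sums) matches the paper's, and your first and third paragraphs are essentially correct: the direct verification that $G_n\to G$ locally uniformly from the truncated products and \eqref{eqnGinv} is exactly what the paper means by ``Lemma~\ref{lemConvGreen} and the definition of our measures $\omega_n$'', and the identification of \eqref{eqnTrfConv} with $\trace{S_n^{-1}}\to\trace{S^{-1}}$, i.e.\ with the convergence of $\sum_{\lambda\in\sigma(S)\cap(0,n)}\kappa_\lambda\lambda^{-1}$ to $\sum_{\lambda\in\sigma(S)}\kappa_\lambda\lambda^{-1}$ (note the multiplicities $\kappa_\lambda$, which you dropped), is how the paper deduces the trace statement from \eqref{eqnTraceForm}.

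The genuine gap is in your second paragraph, at the step ``$G^\nu=G$ forces $\nu=\omega$''. This implication is false in general, and its failure is precisely the non-uniqueness phenomenon that the coupling matrices were introduced to repair. By \eqref{eqnGinvExp}, equality of the $G$-functions yields only $\nu(\lbrace c\rbrace)=\omega(\lbrace c\rbrace)$ together with equality of the \emph{sums} $\sum_{\edge\in\edges}\tilde{\phi}_\edge'(z,0)/\tilde{\phi}_\edge(z,0) = \sum_{\edge\in\edges}\phi_\edge'(z,0)/\phi_\edge(z,0)$, and a sum of Herglotz--Nevanlinna functions does not determine its individual summands when poles are shared. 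Worse, $G$ alone does not even encode $\sigma(S)$ or the individual edge spectra: an eigenvalue $\lambda$ with $\edges_\lambda\not=\emptyset$ appears in $G$ only after cancelation (it is a simple zero, not a pole, of $G$, by Proposition~\ref{propMult}), and nothing in $G$ records to which edges a given zero belongs, nor what the coupling matrices are. Your parenthetical ``and the edge data'' is exactly the missing content: to invoke the uniqueness part of Theorem~\ref{thmIP} you must first prove that the subsequential limit $\nu$ has edge spectra $\sigma(S_\edge)$ and eigenfunction norms as in \eqref{eqnNormPhiDef} (equivalently, coupling matrices $\Gamma_\lambda$), and none of this follows from $G^\nu=G$. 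The paper closes this gap by working edge by edge \emph{before} touching $G$: the weak$^\ast$ convergence of the restrictions $\omega_{n_k,\edge}$, together with the convergence of the norms $\|\phi_{n_k,\edge}(\mu,\cdot\,)\|_{H_0^1(I_\edge)}^2$ (this is where the hypothesis $\Gamma_{n,\lambda}=\Gamma_\lambda$ enters, through \eqref{eqnNormPhiDef}), is fed into the stability result \cite[Proposition~3.3]{ThreeSpectra} to conclude that the limit measure on each edge has spectrum $\sigma(S_\edge)$ and the same norms, whence $\tilde{\omega}_\edge=\omega_\edge$ by the single-string uniqueness theorem \cite[Theorem~4.1]{ThreeSpectra}; only after that is $\tilde{G}=G$ used, and only to recover the central point mass. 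Without this (or an equivalent) edge-identification argument, your proof does not go through.
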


\begin{proof}
 Since the set $\lbrace \omega_n \,|\, n\in\N \rbrace$ is relatively compact in the weak$^\ast$ topology due to Proposition~\ref{propTrace}, it suffices to show that every convergent subsequence of $\omega_n$ converges to $\omega$.  
 Therefore, we pick some convergent subsequence $\omega_{n_k}$ of $\omega_n$ with limit $\tilde{\omega}\in\M(\G)$ and denote all quantities corresponding to $\tilde{\omega}$ with an additional twiddle.  
 In particular, for every given edge $\edge\in\edges$ we have  
 \begin{align*}
  \int_{I_\edge} f_\edge(x) x \biggl(1-\frac{x}{l_\edge} \biggr) d\omega_{n_k,\edge}(x) \rightarrow \int_{I_\edge} f_\edge(x) x \biggl( 1-\frac{x}{l_\edge} \biggr) d\tilde{\omega}_\edge(x)
 \end{align*}
 for each function $f_\edge\in C_0(I_\edge)$, as well as the limit 
 \begin{align*}
   \lim_{k\rightarrow\infty}\|\phi_{n_k,\edge}(\mu,\cdot\,)\|_{H_0^1(I_\edge)}^2 = \|\phi_\edge(\mu,\cdot\,)\|_{H_0^1(I_\edge)}^2, \quad \mu\in\sigma(S_\edge),
 \end{align*}
 in view of~\eqref{eqnNormPhiDef}. 
 Thus we infer from \cite[Proposition~3.3]{ThreeSpectra} that $\sigma(\tilde{S}_\edge) = \sigma(S_\edge)$ and  
 \begin{align*}
  \|\tilde{\phi}_\edge(\mu,\cdot\,)\|_{H_0^1(I_\edge)}^2 = \|\phi_\edge(\mu,\cdot\,)\|_{H_0^1(I_\edge)}^2, \quad \mu\in\sigma(S_\edge). 
 \end{align*}
 But this already shows that $\tilde{\omega}_\edge = \omega_\edge$ for every edge $\edge\in\edges$ in view of \cite[Theorem~4.1]{ThreeSpectra}. 
 Moreover, by Lemma~\ref{lemConvGreen} and the definition of our measures $\omega_n$ we have 
 \begin{align*}
  \tilde{G}(z) = G(z), \quad z\in\C\backslash\R,
 \end{align*} 
 which gives $\tilde{\omega}(\lbrace c\rbrace) = \omega(\lbrace c\rbrace)$.   
 As mentioned at the beginning of the proof, this implies $\omega_n\rightharpoonup^\ast\omega$ and the remaining claim~\eqref{eqnTrfConv} is immediate from~\eqref{eqnTraceForm}. 
\end{proof}

As a final remark, let us mention that under the assumptions of Theorem~\ref{thmAppro}, the corresponding operators $S_n^{-1}$ converge to $S^{-1}$ in the trace ideal norm. 
Indeed, the convergence in the strong operator topology is guaranteed by the proof of Lemma~\ref{lemConvGreen} and the (necessary) convergence of the trace norms is provided in~\eqref{eqnTrfConv}.

\bigskip
\noindent
{\bf Acknowledgments.}
 I gratefully acknowledge the kind hospitality of the {\em Institut Mittag-Leffler} (Djursholm, Sweden) during the scientific program on {\em Inverse Problems and Applications} in spring 2013, where this article was written.

\end{document}